\DeclareFontFamily{U}{shuffle}{}
\DeclareFontShape{U}{shuffle}{m}{n}{ <-8>shuffle7 <8->shuffle10}{}
\theoremstyle{plain}
\newtheorem{thm}{Theorem}[section]
\newtheorem{mainthm}[thm]{Main Theorem}
\newtheorem{lem}[thm]{Lemma}
\newtheorem{conj}[thm]{Conjecture}
\newtheorem{cor}[thm]{Corollary}
\theoremstyle{definition}
\newtheorem{defn}[thm]{Definition}
\newtheorem{rem}[thm]{Remark}
\newtheorem{eg}[thm]{Example}
\newtheorem{prob}[thm]{Problem}
\newtheorem{probs}[thm]{Problems}
\newtheorem{mainprob}[thm]{Main Problems}
\newcommand{\nc}{\newcommand}
\nc{\gl}{{\lambda}}
\nc{\vep}{{\varepsilon}}
\nc{\gd}{{\delta}}
\nc{\N}{{\mathbb N}}
\nc{\calO}{{\mathcal O}}
\nc{\calS}{{\mathcal S}}
\nc{\calT}{{\mathcal T}}
\nc{\calD}{{\mathcal D}}
\nc{\calM}{{\mathcal M}}
\nc{\bfn}{{\mathbf n}}
\begin{document}

\title{\bf The Largest Circle Enclosing $n$ Lattice Points}
\author{
Jianqiang Zhao\thanks{Email: zhaoj@ihes.fr, ORCID 0000-0003-1407-4230.}\\ [2mm]
Department of Mathematics, The Bishop's School, La Jolla, CA 92037, USA}
\date{}
\maketitle

\noindent
{\bf Abstract.}
In this paper, we propose a class of elementary plane geometry problems closely related to the title of this paper. Here, a circle is the 1-dimensional curve bounding a disk. For any nonnegative integer, a circle is called $n$-enclosing if it contains exactly $n$ lattice points on the $xy$-plane in its interior. The main questions are when the largest $n$-enclosing circle exists and what the largest radius is. We study the small integer cases by hand and extend to all $n<1100$ with the aid of a computer. We find that frequently such a circle does not exist, e.g., when $n=5,6$. We then show a few general results on these circles including some regularities among their radii and an easy criterion to determine exactly when the largest $n$-enclosing circles exist. Further, from numerical evidence, we conjecture that the set of integers whose largest enclosing circles exist is infinite, and so is its complementary in the set of nonnegative integers. Throughout this paper we present more mysteries/problems/conjectures than answers/solutions/theorems. In particular, we list many conjectures and some unsolved problems including possible higher dimensional generalizations at the end of the last two sections.

\medskip
\noindent{\bf AMS Subject Classifications (2020):} 152C05, 52C35, 52C15, 52C25.

\section{Introduction}
There are many naive-looking number theory problems, such as the twin prime conjecture and the Goldbach conjecture, whose solutions are extremely difficult and remain unsolved until this day. Many geometry problems have also fascinated human kind for millennia, e.g.,  the Pythagorean theorem, known to Babylonians well before Pythagoras and named ``Gou-Gu theorem'' in the East, and the much deeper Poincare conjecture on sphere packing, one of the Millennium Prize Problems.

In this paper, we propose to study a class of seemingly naive plane geometry problems concerning circles and lattice points on the $xy$-plane. Here, a circle means the 1-dimensional circumference of a disk and a lattice point is a point on the $xy$-plane with integer coordinates.

We first recall a famous result of H.\ Steinhaus first stated as Problem 24 on page 17 of his book \cite{Steinhaus1964} and then answered on page 84.
\begin{thm}\label{thm:Steinhaus}
For every positive integer $n$, there exists a circle of area $n$ enclosing exactly $n$ lattice points in its interior.
\end{thm}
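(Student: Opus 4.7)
The plan is to fix $n \ge 1$, set $r = \sqrt{n/\pi}$ (so a disk of radius $r$ has area exactly $n$), and find a center $P \in \mathbb{R}^2$ so that the open disk $D(P,r)$ contains exactly $n$ lattice points. The key idea is to allow the center to depend on $n$ and to locate it by averaging rather than by choosing one ``universal'' irrational center.

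Define $f(P) := |D(P,r) \cap \mathbb{Z}^2| = \sum_{Q \in \mathbb{Z}^2} \mathbf{1}_{|P-Q| < r}$. Applying Fubini and using that the translates $[0,1]^2 + Q$ tile $\mathbb{R}^2$, I get
\[
\int_{[0,1]^2} f(P)\, dP \;=\; \sum_{Q \in \mathbb{Z}^2} \operatorname{area}\bigl([0,1]^2 \cap D(Q,r)\bigr) \;=\; \operatorname{area}\bigl(D(0,r)\bigr) \;=\; \pi r^2 \;=\; n.
\]
Since $f$ is $\mathbb{Z}$-valued on a set of measure one with integer mean $n$, either $f \equiv n$ on a subset of positive measure (in which case we are done by picking any such $P$), or there exist centers $P_1,P_2$ with $f(P_1)\le n-1$ and $f(P_2)\ge n+1$.

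In the latter case I would run a discrete intermediate value argument along a path. The function $f$ is locally constant off the singular set $S = \bigcup_{Q \in \mathbb{Z}^2}\{P : |P-Q| = r\}$, a locally finite union of circles; the pairwise intersections of distinct circles in $S$ form a discrete ``bad'' set $T \subset S$. Choose a smooth path $\gamma:[0,1] \to \mathbb{R}^2$ from $P_1$ to $P_2$ that is transversal to each circle of $S$ and avoids $T$; such $\gamma$ exist generically since $T$ has codimension $2$. By compactness of the image, $\gamma$ meets only finitely many circles in $S$. At each transversal crossing of a single circle $\{|P - Q_0| = r\}$, the lattice point $Q_0$ either enters or leaves $D(P,r)$, so $f\circ\gamma$ jumps by exactly $\pm 1$. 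Hence $f\circ\gamma$ takes every integer value between $f(P_1)\le n-1$ and $f(P_2)\ge n+1$, and in particular takes the value $n$ at some point along the path.

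The main obstacle is precisely this genericity step: one must avoid centers equidistant from two or more lattice points at distance $r$, since at such $P$ the function $f$ could in principle jump by more than $1$ and the IVT could skip over the value $n$. This is however routine, because the bad locus $T$ is a discrete set, so an arbitrarily small perturbation of any connecting path removes the coincidences; the rest is mere bookkeeping. Combining the averaging identity with this local analysis yields a circle of area $n$ enclosing exactly $n$ lattice points for every $n \ge 1$.
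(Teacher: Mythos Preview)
The paper does not supply its own proof of this theorem; it is quoted as a classical result of Steinhaus (for the version without the area constraint) and Honsberger (for the area-$n$ strengthening), with pointers to their books. So there is no in-paper argument to compare against directly.

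Your argument is correct. The averaging identity $\int_{[0,1]^2} f = \pi r^2 = n$ is a clean way to produce centers $P_1,P_2$ with $f(P_1)\le n-1$ and $f(P_2)\ge n+1$ once the case that $\{f=n\}$ has positive measure is set aside; and the path argument --- choosing a generic path from $P_1$ to $P_2$ that avoids the discrete set $T$ of centers lying at distance exactly $r$ from two lattice points, and is transversal to the (locally finitely many) circles in $S$ --- does force $f\circ\gamma$ to hit every integer between its endpoint values, since each transversal crossing changes $f$ by exactly $\pm 1$. One small point worth making explicit: choose $P_1,P_2$ themselves off the null set $S$ (which is free, since they come from sets of positive measure), so that $f\circ\gamma$ is locally constant at the endpoints and the step-function picture is clean there.

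For context, the classical Steinhaus argument for the unconstrained version fixes a single center such as $(\sqrt{2},\,1/3)$, from which all lattice points lie at pairwise distinct distances, and then lets the radius grow; that yields a circle enclosing exactly $n$ points for every $n$ but with no control over the area. Your approach --- fixing the radius $r=\sqrt{n/\pi}$ and varying the center --- is precisely what is needed for the area-constrained statement, and your use of the averaging identity to locate $P_1$ and $P_2$ is a tidy substitute for any explicit estimate on $f$.
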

In fact, the original problem did not have the condition on the area. Honsberger \cite[p.~118]{Honsberger1973} further strengthened it by restricting the area to be $n$. Several years earlier, Schinzel \cite{Schinzel1958} showed that for every positive integer $n$, there exists a circle on the plane having exactly $n$ lattice points on its circumference, which was given the name \emph{Schinzel circle}. Also see \cite[p.~118]{Honsberger1973}. For any circle of radius $r$ centered at the origin, it is well known that the number $N(r)$  of lattice points in the interior \emph{and on the boundary} is given by
\begin{equation}\label{equ:NumberOfPt}
     N(r)=1+4\lfloor r\rfloor+4\sum_{j=1}^{\lfloor r\rfloor} \lfloor \sqrt{r^2-j^2}\rfloor.
\end{equation}
See, e.g., \cite{Hilbert1999} or \cite[Eq. (8)]{FraserGotlieb1962}.
The sequence $\{N(k)\}_{k\ge 1}$ is the sequence [A000328] on the OEIS website \cite{Sloane2025}.
By considering the area one can see that the number $N(r)$ is close to $\pi r^2$ for large $r$. In particular, one may find the following estimate credited to Gauss in Hardy's book \cite[p.~67]{Hardy1999}:
\begin{equation}\label{equ:GaussBound}
    N(r)=\pi r^2+E(r), \quad \text{where}\quad |E(r)|\le 2\sqrt{2} \pi r.
\end{equation}
We will present a related result in Theorem \ref{thm:RadiusBound} with a short proof. Incidentally, the error bound for $E(r)$ can be further improved which turns out to be a significant problem having close connection with analytic number theory and therefore has attracted quite some work throughout the past century. See \cite{Chen1963,Cilleruello1993,FraserGotlieb1962,Hua1942,Huxley,Iwaniec1988,Keller1963}.

In another direction, we found the following as Problem \textbf{634} on p.\ 57 of \cite{exeter2017}:
\begin{quote}
What is the radius of the largest circle that you can draw on graph paper that encloses
\begin{alignat*}{4}
&\text{(a) no lattice points?} &&\text{(b) exactly one lattice point?}\\
&\text{(c) exactly two lattice points?}\qquad &&\text{(d) exactly three lattice points?}
\end{alignat*}
\end{quote}

To answer the above questions it is intuitively helpful to draw the pictures using some computer software so that the radii of the circles can be varied continuously. We have used \textsf{Geogebra} heavily in our experiments.

To generalize the above questions we need to introduce some terminology. We call a nonnegative integer $n$ \emph{maximally circlable}\footnote{The word ``circlable'' does not really exist in English. Perhaps a more precise version should be \emph{maximally enclosable by a circle}. But the acronym MEBAC is still too long compared to MC.} if  there exists a largest circle enclosing exactly $n$ lattice points in its interior. Otherwise $n$ is called \emph{non-maximally circlable} (non-MC). Since such circles might not be unique what we mean by ``largest'' is the largest radius of these circles. For brevity, we will often drop ``exactly'' and ``in its interior'' in the rest of the paper.

For convenience, we fix the \emph{key triangle} $\Delta$ as the triangle with vertices at $(1/2,0)$, $(1/2,1/2)$, and $(1,0)$, including both its interior and its boundary. If $n$ is MC then we let $\calM_n$ be a largest circle enclosing $n$ lattice points with its center $O\in \Delta$, called an \emph{MC-circle  of $n$}, and let $R_n$ be its radius, called the \emph{MC-radius of $n$}. It is not clear if such a largest circle is always unique even with this restriction of its center. However, by rigid motions of the lattice system (i.e., isometries keeping the grid invariant)\footnote{It is easy to see that all grid invariant isometries are given by translations by integer values, reflections about $x,y=$ integers or half integer or $y=\pm x+$ integers, and rotations by multiples of $90^{\circ}$.}, every MC-circle of $n$ can be transformed to such an $\calM_n$. We call a set of $n$ lattice points a \emph{MC-set of $n$} if an MC-circle of $n$ encloses them. A circle is called a \emph{lattice circle} if it has at least three lattice points on its circumference. From overwhelming evidence, we formulate the following conjectures.

\begin{conj}\label{conj:MaxCircleUnique}
For any MC integer $n$, both the configuration of the MC-set and the MC-circle of $n$ are unique up to rigid motions of the lattice system.
\end{conj}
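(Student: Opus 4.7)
The plan is to attack the conjecture through the following chain of reductions. First I would establish the structural observation that any MC-circle $\calM_n$ must carry at least three lattice points on its circumference. If it carried at most two, one could perturb the center along the perpendicular bisector of the two boundary points (or freely in any direction if there are no boundary points) to strictly enlarge the radius while keeping the same set of $n$ interior lattice points (using the fact that the distance from the center to every other lattice point is strictly greater than $R_n$, so the perturbation can be chosen small enough to preserve all strict inequalities). This would contradict maximality. Hence every MC-circle is a lattice circle in the sense defined by the paper.

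Once the MC-circle is known to pass through at least three lattice points, the circle is uniquely determined by any such triple via circumscription, and its radius equals the circumradius of the corresponding triangle. The next step is to translate the conjecture into a purely combinatorial statement about lattice triangles: suppose $T_1$ and $T_2$ are two lattice triangles with the same circumradius $R_n$, whose circumscribed circles each enclose exactly $n$ lattice points in their interiors, and whose circumcenters both lie in the key triangle $\Delta$. I would then try to show that $T_1$ and $T_2$ are congruent via a grid-invariant isometry, and that the enclosed point sets match under the same isometry. The normalization that the center lies in $\Delta$ is essentially a fundamental domain for the isometry group described in the footnote, so if uniqueness up to rigid motions fails, there must be two genuinely different configurations with centers in $\Delta$.

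The heart of the argument, and the part I expect to be the main obstacle, is ruling out numerical coincidences of the following shape: two non-congruent lattice triangles with identical circumradii whose circumscribed disks happen to contain the same count of interior lattice points. Equal circumradii are already abundant because $r^2$ can have many representations as sums of squared rationals; the classical link to the divisor function of sums of two squares makes this a genuinely arithmetic phenomenon rather than a geometric one. My plan here would be twofold. On one hand, use the Gauss-type estimate in equation \eqref{equ:GaussBound} together with the refined version the paper promises in Theorem \ref{thm:RadiusBound} to show that as $R_n$ grows the interior count is rigidly tied to the area, so distinct configurations with the same radius would have to differ by $O(R_n)$ boundary lattice points at most, and then compare configurations by looking at how boundary points are distributed. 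On the other hand, exploit the criterion (also promised later in the paper) for exactly when an MC-circle exists: such a criterion should force the three (or more) boundary points of $\calM_n$ to sit in a very restricted combinatorial position relative to the interior set, so that their relative geometry is forced up to grid isometries.

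A fallback plan, and the more realistic one given that the author leaves this as a conjecture rather than a theorem, is to try to prove weaker versions. For instance, one could try to show uniqueness of the MC-radius $R_n$ (a single real number) first, and then separately show uniqueness of the boundary-point configuration, and finally uniqueness of the interior configuration conditional on the first two. Even the first claim is nontrivial because it amounts to asserting that no two incongruent lattice polygons inscribed in circles of equal radius can simultaneously be extremal for enclosing exactly $n$ lattice points. I would therefore expect that any honest proof must combine the Gauss circle estimates with some genuinely new arithmetic input about simultaneous lattice representations, and that for small $n$ one can at best verify the conjecture by the explicit computer enumeration up to $n < 1100$ that the paper already alludes to.
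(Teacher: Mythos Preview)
The statement you are attempting to prove is labeled a \emph{Conjecture} in the paper, not a theorem; the paper offers no proof whatsoever, only numerical verification for $n<1100$. So there is no paper proof to compare against, and you yourself acknowledge this in your final paragraph. What you have written is a plan of attack, not a proof, and the plan does not close the problem.

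Your first reduction is fine and in fact coincides with the paper's Theorem~\ref{thm:MCLatticeCircle}: every MC-circle is a lattice circle. But the step where the argument must do real work---ruling out two non-congruent lattice triangles with equal circumradius whose circumscribed disks each enclose exactly $n$ interior lattice points---is left as an obstacle you name but do not surmount. The tools you propose are too weak for it: the Gauss-type bound in Theorem~\ref{thm:RadiusBound} controls the interior count only up to an $O(R_n)$ error, which cannot separate configurations that agree exactly; and the ``criterion for when the largest $n$-enclosing circle exists'' is Corollary~\ref{cor:NonMCradiiStrictDecrease}, a comparison of radii that says nothing about uniqueness of the extremal configuration for a given $n$. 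Your proposal thus reformulates the conjecture but supplies no mechanism to exclude the arithmetic coincidences you correctly flag as the heart of the matter. This is not a flaw in execution so much as an honest recognition that the conjecture is open; but it should be stated plainly that no proof has been given.
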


We want to caution the reader that for a fixed integer $n$ there may be different $n$-enclosing lattice circles with different configurations of the $n$-lattice points even under rigid motions. But most of these circles are not the MC-circle of $n$. See, e.g., the two $89$-enclosing circles on page \pageref{fig:case89}.

\textbf{Key Notation and Terminology.} Throughout the paper, we adopt the convention that $r_n=R_n$ means $n$ is MC and $R_n$ is its MC-radius. On the other hand, the notation $r_n\prec R_n$ means $n$ is non-MC and $R_n$ is the \emph{least upper bound of the radii} of all the $n$-enclosing circle. This is a little mouthful and therefore we will use the acronym \emph{LUBOR} of $n$ in the rest of the paper. We also emphasize that \textbf{the notation $r_n$ and $R_n$ are always reserved for this purpose}. When we say ``enclose'' we always mean to contain inside but not on the boundary/circumference.

We will primarily focus on the following questions in this paper.

\begin{mainprob}  \label{mainprob}
Let $n$ be a nonnegative integer.
\begin{enumerate}[label=(\arabic*)]
  \item \label{mainprob:Case1} When is $n$ MC?  When is $n$ non-MC? Are there good criteria?

  \item  \label{mainprob:Case2}  How to determine MC-radius of $n$ if $n$ is MC?

  \item  \label{mainprob:Case3}  How to determine LUBOR of $n$ if $n$ is non-MC?

  \item  \label{mainprob:Case4}  Is there a largest $n$-enclosing \textbf{lattice} circle?
\end{enumerate}
\end{mainprob}

We will answer Main Problem \ref{mainprob}\ref{mainprob:Case1}--\ref{mainprob:Case4}  for all $n<1100$. In this range, we also find that the answer to Main Problem \ref{mainprob}\ref{mainprob:Case4} is affirmative for all $n\ne 6$ but cannot confirm this in general at present. However, we can show that the MC-circles of $n$ are all lattice circles in Theorem \ref{thm:MCLatticeCircle}.

For examples, we will prove in Theorem \ref{thm:R0-4} that $R_4=\sqrt{10}/2$ which means the largest circle enclosing $4$ lattice points exists and its radius is $\sqrt{10}/2$. It is quite obvious that four lattice points form a MC-set of $4$ if and only if they form a unit square, which confirms Conjecture \ref{conj:MaxCircleUnique}.

In Theorem \ref{thm:R5-6}, we will prove that $r_{5}\prec R_4$ and $r_{6}\prec R_4$, i.e., the largest circles enclosing $5$ or $6$ lattice points do not exist and their LUBOR are both $\sqrt{10}/2$. This phenomenon relating the LUBOR of a non-MC number to some MC-radius of a smaller MC number should continue to hold for other non-MC numbers. We confirmed this in Theorem \ref{thm:FindNonMaxRadius} providing a satisfactory answer to Main Problem \ref{mainprob}\ref{mainprob:Case3}.

Concerning the relations between different MC-radii and the LUBORs, we have the following main results.
\begin{mainthm}\label{mainthm:AllResults}
Let $\{R_n: n\ge 0\}$ be the positive sequence of MC-radii (for MC $n$) and LUBORs (for non-MC $n$).
Then we have:
\begin{enumerate}[label=(\arabic*)]
  \item \label{mainthm:Case1} (=Theorem \ref{thm:RnIncrease}) $\{R_n: n\ge 0\}$ is a non-decreasing sequence.

  \item \label{mainthm:Case3}  (=Theorem \ref{thm:FindNonMaxRadius}) Suppose $k<\ell$ are two consecutive MC numbers and $n$ is non-MC such that $k<n<\ell$. Then $R_n=R_k$, namely, $R_k$ is the LUBOR for all such non-MC $n$ immediately following $k$.

  \item \label{mainthm:Case4}  (=Corollary \ref{cor:NonMCradiiStrictDecrease}) Suppose $k<n$, $k$ is an MC number and all the numbers between $k$ and $n$ (exclusive) are non-MC. Then the radius of the largest $n$-enclosing \textbf{lattice} circle is \textbf{strictly} smaller than $R_k$ if and only if $n$ is non-MC.

\end{enumerate}
\end{mainthm}

We now briefly describe the content of this paper. We start with small integers in the next section and provide a few essential ideas to be used in the general case. In Section \ref{sec:general}, we state a few general results including those in the Main Theorem, which provide the foundation for our computer computation in  Section \ref{sec:computer}.

Section \ref{sec:SpecialClass} focuses on two special classes of lattice circles which likely will include infinitely many MC-circles. Most of our computational results are then presented in  Section \ref{sec:computer} which completely solves the Main Problems \ref{mainprob} for all $n<1100$. We then conclude the paper with a brief summary and some further problems for possible future research at the end of the last two sections.


\section{Small numbers: $n\le 18$}\label{sec:smallNumbers}
We can treat the cases with $n\le 4$ through pure geometric argument.

\begin{thm} \label{thm:R0-4}
The MC-radii $R_0=\sqrt{2}/2$, $R_1=1$, $R_2=\sqrt{5}/2$, $R_3=5\sqrt{2}/6$, and $R_4=\sqrt{10}/2$.
\end{thm}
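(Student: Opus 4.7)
The plan is to treat each of $n=0,1,2,3,4$ separately, for each producing an explicit candidate MC-circle with center in $\Delta$ of the claimed radius and then verifying that no $n$-enclosing circle can be larger.

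Explicit constructions: for $n=0$, take the circumscribed circle of the unit square, centered at $(1/2,1/2)$ with radius $\sqrt{2}/2$, which has no lattice point strictly inside (the four corners lie on its circumference). For $n=1$, take the unit circle centered at the lattice point $(1,0)\in\Delta$, whose only enclosed lattice point is the center and whose boundary passes through the four nearest neighbors. For $n=2$, take the circle at $(1/2,0)$ of radius $\sqrt{5}/2$, enclosing $(0,0),(1,0)$ with $(0,\pm 1),(1,\pm 1)$ on its circumference. For $n=3$, take the circumcircle of $\{(1,1),(-1,0),(0,-1)\}$, which has center $(1/6,1/6)$ and radius $5\sqrt{2}/6$; it encloses the L-triple $\{(0,0),(1,0),(0,1)\}$, and its center moves to $(5/6,1/6)\in\Delta$ under a grid-invariant reflection. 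For $n=4$, take the circle at $(1/2,1/2)$ of radius $\sqrt{10}/2$, enclosing the unit square with eight symmetric neighbors at distance $\sqrt{10}/2$ on its circumference. In each case a short computation of distances from the proposed center to nearby lattice points confirms that exactly $n$ are strictly enclosed.

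For the matching upper bound, $n=0$ follows from the classical covering fact that the Voronoi cells of $\mathbb{Z}^2$ are unit squares of circumradius $\sqrt{2}/2$, so every disk disjoint from the lattice fits inside one cell. For $n\ge 1$, the key observation is that in a largest $n$-enclosing circle the boundary must meet at least three lattice points: otherwise a small perturbation of the center and a slight increase in the radius preserve the count. This reduces, for each fixed configuration of the $n$ enclosed points, the maximum radius to a circumcenter of three "forbidden" neighbors. I would then enumerate, up to the rigid motions of the grid, the short list of lattice configurations of size $n$ that can fit inside a disk of diameter close to $2R_n$: a single point for $n=1$; a unit-distance pair (with the diagonal pair as an alternative) for $n=2$; the L-tromino and the I-tromino for $n=3$; the square tetromino together with the few non-square tetrominoes for $n=4$. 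For each, a direct circumcircle computation produces the maximum admissible radius, and comparison across the list identifies the winner as exactly the value claimed.

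The main obstacle will be the configuration enumeration for $n=3$ and $n=4$, where the asymmetric options must be either ruled out (because they force additional lattice points inside) or shown to admit only smaller enclosing circles. For example, in the $n=3$ case the I-tromino $\{(0,0),(1,0),(2,0)\}$ with symmetric center $(1,0)$ must have $(1,\pm 1)$ outside, forcing radius at most $1<5\sqrt{2}/6$. For $n=4$, each non-square tetromino (L, T, S, I) either pushes an intermediate lattice point into the disk or is constrained by closer excluded neighbors, giving a strictly smaller maximum radius than $\sqrt{10}/2$. Once these alternatives are dispatched, the symmetric configurations above are optimal and the circumcircle calculations yield the stated values of $R_0,\dots,R_4$.
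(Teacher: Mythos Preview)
Your overall strategy is sound, and the explicit lower-bound constructions agree with the paper's. The upper-bound approach, however, differs genuinely from the paper's. The paper does not enumerate the possible interior configurations at all; instead it uses the grid symmetries to restrict the center $O$ to the key triangle $\Delta$ with vertices $(1/2,0),(1/2,1/2),(1,0)$, and then for each $n$ exhibits specific lattice points whose distance from \emph{every} point of $\Delta$ (or of a suitable subdivision of $\Delta$) is at most the claimed $R_n$. For example, for $n=2$ one simply observes that $(0,0),(1,0),(1,1)$ are all within $\sqrt{5}/2$ of any $O\in\Delta$; for $n=3$ the triangle is split into two pieces by a short perpendicular, and on each piece a fourth lattice point is identified within $5\sqrt{2}/6$. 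This method avoids the configuration case split entirely and scales more gracefully to larger $n$ (the paper pushes it by hand through $n=40$). Your route via the lattice-circle lemma plus interior-configuration enumeration is legitimate, but it front-loads exactly the casework you flag as the ``main obstacle,'' and it gets heavier fast.

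Two local issues. First, your $n=0$ argument misstates the covering fact: a lattice-free disk need \emph{not} fit inside a single Voronoi cell (the disk of radius $0.7$ at $(1/2,1/2)$ meets four cells). What is true, and sufficient, is that the covering radius of $\mathbb{Z}^2$ is $\sqrt{2}/2$, so every disk of larger radius contains a lattice point. Second, your I-tromino dismissal assumes the center is at $(1,0)$, but the optimal circle for a given configuration need not be centered at the centroid; you would need the extra step that any circle enclosing $(0,0)$ and $(2,0)$ already forces one of $(1,\pm 1)$ inside (which is a two-line inequality), hence the I-tromino is never an exact enclosed set. Similar care is needed for each non-square tetromino in the $n=4$ case: several of them (I, L) are in fact never exactly enclosed, while others (T, S) require an honest optimization over the center rather than a single symmetric choice. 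Once these are tightened your argument goes through, but the paper's key-triangle method gets there with noticeably less bookkeeping.
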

\begin{proof}
We give detailed proofs of these cases because, although they are simple, they reflect almost most of the main ideas one can use to prove the general MC cases.

First, the circles in Figure~\ref{fig:R0-4} imply the lower bounds of  $R_n$ for $0\le n\le 4$. Now we need to show the opposite inequalities so that $R_n$ are equal to these values, respectively.

\begin{figure}[H]
\centering
  \includegraphics[scale=0.6]{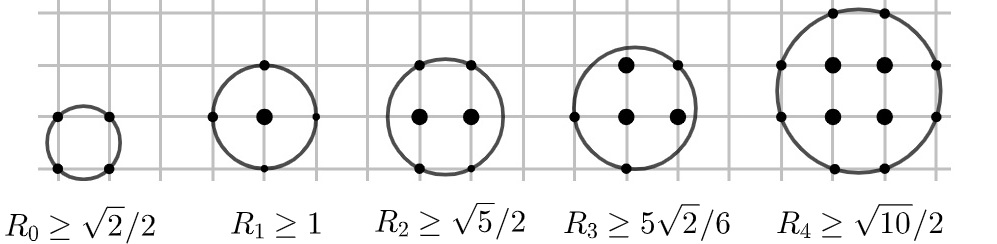}
\caption{The largest circles enclosing zero to four lattice points.}
\label{fig:R0-4}
\end{figure}

$n=0$. Let $O$ be the center of a circle whose radius $r>\sqrt{2}/2$. We now show that it must enclose at least one lattice point, which implies that $R_0\le \sqrt{2}/2$ and so $R_0=\sqrt{2}/2$. Indeed, as we explained in the introduction we may assume $O\in\Delta$ as shown in the picture
\raisebox{-3pt}{\begin{tikzpicture}[scale=0.5]
\draw (0,0)--(1,0)--(1,1)--(0,1)--(0,0);
\draw[dotted] (0,0)-- (1,1);
\draw[dotted] (1,0)--(0,1);
\filldraw [fill=black] (0.6,0.2) circle (1pt);
\end{tikzpicture}}.
Then the lower right corner point $(1,0)$ is at most $\sqrt{2}/2$ away from $O$ and therefore must lie in the interior of the circle $O$ of radius $r>\sqrt{2}/2$.

$n=1$. Let $O\in\Delta$ be the center of a circle whose radius $r>1$. We now show that it must enclose at least two lattice points, which implies that $R_1\le 1$ and so $R_1=1$. By the picture
\raisebox{-3pt}{\begin{tikzpicture}[scale=0.5]
\draw (0,0)--(1,0)--(1,1)--(0,1)--(0,0);
\draw[dotted] (0,0)-- (1,1);
\draw[dotted] (1,0)--(0,1);
\filldraw [fill=black] (0.6,0.2) circle (1pt);
\end{tikzpicture}},
the bottom two vertices $(0,0)$ and $(1,0)$ are at most $1$ unit away from $O$ and therefore must lie in the interior of the circle $O$ of radius $r>1$.

$n=2$. Let $O\in\Delta$ be the center of a circle whose radius $r>\sqrt{5}/2$. We now show that it must enclose at least three lattice points, which implies that $R_2\le \sqrt{5}/2$ and so $R_2=\sqrt{5}/2$. By the picture
\raisebox{-3pt}{\begin{tikzpicture}[scale=0.5]
\draw (0,0)--(1,0)-- (1,1)--(0,1)--(0,0);
\draw[dotted] (0,0)--(1,1);
\draw[dotted] (0.5,0)--(0.5,1);
\draw[dotted] (1,0)--(0,1);
\draw[dotted] (0,0.5)--(1,0.5);
\filldraw [fill=black] (0.6,0.2) circle (1pt);
\end{tikzpicture}}
the bottom two lattice vertices $(0,0)$ and $(1,0)$ already lie in the interior of the circle $O$ of radius $r>\sqrt{5}/2>1$.
Further, the upper right corner point $(1,1)$ of the square is at most $\sqrt{5}/2$ away from $O$ and therefore must lie in the interior of the circle $O$ of radius $r>\sqrt{5}/2$.

$n=3$. Let $O\in\Delta$ be the center of a circle whose radius $r>5\sqrt{2}/6$. We now show that it must enclose at least four lattice points, which implies that $R_3=5\sqrt{2}/6$. Continuing the argument in the case of $n=2$ we may assume the left picture in Figure~\ref{fig:R3} in which the three lattice points $A$, $B$ and $C$ are already in the interior of the circle $O$.

\begin{figure}[H]
\centering
\begin{tikzpicture}[scale=1.5]
\draw (0,0) node[below]{$A$}--(1,0) node[below]{$B$}--(1,1) node[above]{$C$}--(0,1) node[above]{$D$}--(0,0);
\draw[dotted] (0,0)--(1,1);
\draw[dotted]  (0.5,0) node[below]{$M$} -- (0.5,1);
\draw[dotted] (1,0)--(0,1);
\draw[dotted] (0,0.5)--(1,0.5);
\filldraw [fill=black] (0.6,0.2) circle (0.5pt);
\draw (0.6,0.2) node[right]{$O$};
\end{tikzpicture} \hskip2cm
\begin{tikzpicture}[scale=1.5]
\draw (0,0) node[below]{$B$}--(1,0) node[below]{$C$}--(1,1) node[above]{$D$}--(0,1) node[above]{$A$}--(0,0)-- (-1,0) node[below]{$E$}--(-1,1)--(0,1);
\draw[dotted] (0,0)--(1,1);
\draw[dotted] (1,0)--(0,1);
\draw[dotted] (0.5,0) --(0.5,1);
\draw (0.5,0.5) node[right]{$N$};
\draw[dotted] (0,0.5) node[left]{$M$}--(1,0.5);
\filldraw [fill=black] (0.2,0.4) circle (0.5pt);
\draw (0.2,0.4) node [right]{$O$};
\draw[dotted] (0,0.5)--(0.163,0.163) node[right]{$Q$};
\draw[dashed] (-1,0)--(1,1);
\filldraw [fill=black] (0.163,0.163) circle (0.5pt);
\draw[dashed] (-1,0)--(0.163,0.163);
\end{tikzpicture}
\caption{Circles with radius $r>5\sqrt{2}/6$ must enclose at least four lattice points.}
\label{fig:R3}
\end{figure}
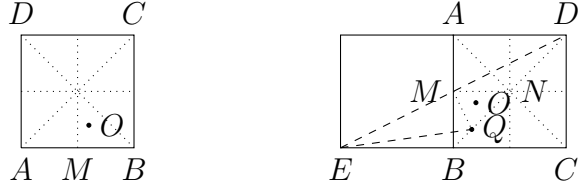

Let $N$ be the center of $ABCD$. To save space, we rotate the picture clockwise by 90${}^\circ$.
Now mark the point $Q$ on $BD$ so that $MQ\bot ED$ as shown by the right picture in Figure~\ref{fig:R3}. Then $|BQ|=\sqrt{2}/6$ and $MQ$ divides $\triangle BMN$ into two smaller triangles. If $O$ is in $\triangle QMN$ then
$$|OD|\le \max\{|MD|,|QD|\}=5\sqrt{2}/6.$$
If $O$ is in $\triangle QMB$ then
$$|OE|\le \max\{|ME|,|QE|, |BE|\}=|QE|=\sqrt{(1/6)^2+(7/6)^2}=5\sqrt{2}/6.$$
So the fourth point, either $D$ or $E$, lies in the interior of the circle $O$ of radius $r>5\sqrt{2}/6$.

$n=4$. This case turns out to be easy because of the apparent symmetry. Let $O\in\Delta$ be the center of a circle whose radius $r>\sqrt{10}/2$. Since $r>\sqrt{2}$ it is easy to see that all the four corners of the unit square $ABCD$ must lie in the interior of the circle.
Let $N$ be the center of $ABCD$ and rotate the square clockwise by 90${}^\circ$ to save space.

\begin{figure}[H]
\centering
\begin{tikzpicture}[scale=1.5]
\draw (0,0) node[below]{$A$}--(1,0) node[below]{$B$}--(1,1) node[above]{$C$}--(0,1) node[above]{$D$}--(0,0);
\draw[dotted] (0,0)--(1,1);
\draw[dotted]  (0.5,0) node[below]{$M$} -- (0.5,1);
\draw[dotted] (1,0)--(0,1);
\draw[dotted] (0,0.5)--(1,0.5);
\filldraw [fill=black] (0.6,0.2) circle (0.5pt);
\draw (0.6,0.2) node[right]{$O$};
\end{tikzpicture} \hskip2cm
\begin{tikzpicture}[scale=1.5]
\draw (0,0) node[below]{$B$}--(1,0) node[below]{$C$}--(1,1) node[above]{$D$}--(0,1) node[above]{$A$}--(0,0)-- (-1,0) node[below]{$E$}--(-1,1)--(0,1);
\draw[dotted] (0,0)--(1,1);
\draw[dotted] (1,0)--(0,1);
\draw[dotted] (0,0.5)--(1,0.5);
\draw[dashed] (0,0.5)--(-0.1,0.5)node[above]{$M$}--(-1,0.5) node[left]{$K$};
\draw[dotted] (0.5,0) node[below]{$L$}--(0.5,1);
\filldraw [fill=black] (0.2,0.4) circle (0.5pt);
\draw (0.2,0.4) node [right]{$O$};
\draw (0.5,0.5) node[above]{$N$};
\end{tikzpicture}
\end{figure}

Then
$$|OE|\le |NE|=\sqrt{10}/2$$
since $O$ is in the rectangle $EKNL$ with diagonal $NE$.
Hence five lattice points $A$, $B$, $C$, $D$ and $E$ are all in the interior of the circle $O$.
\end{proof}

Moving on to the next case, we will see that the largest circle enclosing exactly 5 lattice points does not exist. Namely, 5 is the first non-MC number. We now give detailed proof of this case which contains basically all the main ideas one may use to prove the general non-MC cases. We first prove the following two lemmas.

\begin{lem}\label{lem:n=5Step1}
Every circle of radius $r=\sqrt{10}/2$ must enclose exactly 4 lattice points or at least 7 lattice points. For any real number $r>\sqrt{10}/2$, every circle of radius $r$ must enclose at least 7 lattice points.
\end{lem}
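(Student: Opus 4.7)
The plan is to reduce, via isometries of the lattice, to the case where the center $O = (x_0, y_0)$ of the circle lies in the key triangle $\Delta$, and then to show that the count of strictly interior lattice points is exactly $4$ when $O$ equals the apex $N = (1/2, 1/2)$, and at least $7$ in every other case.

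First I would check that the four corners of the unit square $[0,1]^2$ are always strictly inside a circle of radius $\sqrt{10}/2$ centered at any $O \in \Delta$, by noting that each of the four squared distances $x_0^2 + y_0^2,\ (1-x_0)^2 + y_0^2,\ (1-x_0)^2 + (1-y_0)^2,\ x_0^2 + (1-y_0)^2$ attains a maximum on $\Delta$ strictly less than $5/2$. Next, a direct computation at the apex shows that the eight ``near'' lattice points
\begin{equation*}
(-1, 0),\ (-1, 1),\ (0, -1),\ (1, -1),\ (0, 2),\ (1, 2),\ (2, 0),\ (2, 1)
\end{equation*}
are all at distance exactly $\sqrt{10}/2$ from $N$, while every remaining lattice point is strictly farther; this establishes the ``exactly $4$'' alternative.

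The crux is to show that whenever $O \in \Delta \setminus \{N\}$, at least three of those eight near points are strictly interior. I would produce three witnesses: $(2, 0)$, $(1, -1)$, and at least one of $(0, -1)$ or $(2, 1)$. For the first two, I would verify edge-by-edge on $\Delta$ that the functions $(2-x)^2 + y^2$ and $(1-x)^2 + (1+y)^2$ both attain their $\Delta$-maximum $5/2$ only at $N$. For the third witness, I would argue by contradiction: if neither of the remaining two points is strictly interior, then
\begin{equation*}
x_0^2 + (1+y_0)^2 \ge \tfrac{5}{2} \quad \text{and} \quad (2-x_0)^2 + (1-y_0)^2 \ge \tfrac{5}{2}.
\end{equation*}
Summing and simplifying yields $(x_0 - 1)^2 + y_0^2 \ge 1/2$, and since on $\Delta$ we have $(x_0 - 1)^2 \le 1/4$ and $y_0^2 \le 1/4$ with simultaneous equality only at $N$, this forces $O = N$, contradicting the assumption. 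Hence at least $4 + 3 = 7$ lattice points lie strictly inside whenever $O \neq N$.

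For the second sentence of the lemma, the interior-count is monotone non-decreasing in $r$ for fixed $O$, so for $O \in \Delta \setminus \{N\}$ the lower bound $7$ persists at any $r > \sqrt{10}/2$; and at $O = N$, the eight previously-on-boundary points listed above now lie strictly inside, giving at least $4 + 8 = 12$. The main obstacle is identifying the correct pair of ``third witnesses'' $\{(0, -1), (2, 1)\}$ in the crux step: this is not the most symmetric-looking pairing among the eight near points, but it is precisely the pair whose squared-distance failure inequalities sum to a quantity bounded above by $1/2$ on $\Delta$, attaining equality only at the apex. Spotting this algebraic coincidence is the key non-obvious insight; everything else reduces to routine convex estimates on a right triangle.
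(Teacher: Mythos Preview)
Your proof is correct and follows the same strategy as the paper: reduce to $O \in \Delta$, verify the four unit-square corners are strictly inside, handle $O = N = (1/2,1/2)$ separately, and for $O \ne N$ exhibit three additional interior lattice points from among the eight points at distance $\sqrt{10}/2$ from $N$. The paper uses the very same first two witnesses $(2,0)$ and $(1,-1)$, arguing geometrically that $O$ lies in an axis-parallel rectangle having $N$ and the target point as diagonally opposite corners, so that $N$ is the farthest point of that rectangle from the target.

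Your one unnecessary complication is the either-or step for the third witness. In fact $(0,-1)$ is \emph{always} strictly inside when $O \in \Delta \setminus \{N\}$: the squared distance $x_0^2 + (1+y_0)^2$ is convex, so its maximum over $\Delta$ is attained at a vertex, and evaluating at the three vertices gives values $5/4$, $2$, and $5/2$, the last only at $N$. The paper obtains this directly by noting that $\Delta \subset \triangle NFC$ with $F = (0,-1)$ and $C = (1,0)$, and that $N$ is the vertex of that triangle farthest from $F$. So what you flag as the ``key non-obvious insight'' --- the summation trick pairing $(0,-1)$ with $(2,1)$ --- is not needed, although it is valid and does no harm.
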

\begin{proof}
Let $O\in\Delta$ be the center of a circle whose radius $r\ge\sqrt{10}/2$. If $O=(1/2,1/2)$ and $r=\sqrt{10}/2$ then clearly the circle encloses exactly four lattice points as shown in Figure~\ref{fig:R0-4}. If $O=(1/2,1/2)$ but $r>\sqrt{10}/2$ then clearly the circle encloses at least 12 lattice points.

Now we assume $O\ne (1/2,1/2)$ and $r\ge\sqrt{10}/2$. Then the circle encloses all the four corners of the unit square since $r>\sqrt{2}$. We now show it encloses at least three more lattice points.
We now add two adjacent unit squares as shown in the picture of Figure~\ref{fig:R67}.
\begin{figure}[H]
\centering
\begin{tikzpicture}[scale=1.5]
\draw (0,0) node[anchor=north east]{$B$}--(1,0) node[anchor=north west]{$C$}--(1,1) node[above]{$D$}--(0,1) node[above]{$A$}--(0,0);
\draw (1,0)--(1,-1) node[below]{$E$}--(0.5,-1) node[below]{$M$}--(0,-1) node[below]{$F$}--(0,0);
\draw[dashed] (0.5,0)node[anchor=north east]{$K$} --(0.5,-1) ;
\draw[dotted] (0.5,0)--(0.5,1);
\draw[dotted] (0,0)--(1,1);
\draw[dotted] (1,0)--(0,1);
\draw[dotted] (0,0.5)--(1,0.5);
\draw (1,0.6)node[right]{$L$};
\draw (1,0)--(2,0) node[below]{$G$} -- (2,0.5) node[right]{$Q$}-- (2,1)--(1,1);
\draw[dotted] (2,0.5)--(1,0.5);
\draw (0.5,0.5) node[above]{$N$};
\filldraw [fill=black] (0.6,0.2) circle (0.5pt);
\draw (0.6,0.2) node[right]{$O$};
\draw[dashed] (0,-1)--(1,0);
\end{tikzpicture}
\caption{Circles with radius $r\ge\sqrt{10}/2$ whose center is not at the center of any unit square must enclose at least seven lattice points.}
\label{fig:R67}
\end{figure}
Since $O\ne N=(1/2,1/2)$, we have
\begin{equation*}
 |OE|< |NE|=\sqrt{10}/2, \qquad
 |OG|< |NG|=\sqrt{10}/2.
\end{equation*}
since $O$ is in the rectangle $EMNL$ with diagonal $NE$ and in the rectangle $GKNQ$ with diagonal $NG$. Further, in the right triangle $\triangle NFC$
$$ |OF|< |NF|=\sqrt{10}/2.$$
Thus the three lattice points $G$, $E$ and $F$, together with $A, B, C,$ and $D$, are all in the interior of the circle $O$. This completes the proof of the lemma.
\end{proof}

To prove the next lemma, we need borrow some ideas from calculus involving the concepts of limit and continuity.

\begin{lem}\label{lem:n=5Step2}
There exists a small $\vep>0$ such that for any real number $r\in(\sqrt{10}/2-\vep,\sqrt{10}/2)$, there is a circle $O_5(r)$ of radius $r$ enclosing exactly five lattice points and a circle $O_6(r)$ of radius $r$ enclosing exactly six lattice points.
\end{lem}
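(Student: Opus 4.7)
The plan is a perturbation argument starting from the MC-circle of $4$ constructed in Theorem~\ref{thm:R0-4}, whose center is $N=(1/2,1/2)$ and whose radius is exactly $\sqrt{10}/2$. This circle has the four corners of the unit square strictly inside, and exactly eight lattice points lie on its boundary: $E=(1,-1)$, $F=(0,-1)$, $G=(2,0)$, together with $(-1,0)$, $(2,1)$, $(-1,1)$, $(1,2)$, and $(0,2)$. Every remaining lattice point sits at distance at least $3\sqrt{2}/2>\sqrt{10}/2$ from $N$, so in a fixed neighborhood of $N$ and for $r$ sufficiently close to $\sqrt{10}/2$, the only lattice points that could possibly lie inside the circle of radius $r$ around a perturbed center are the four corners together with some subset of these eight boundary points.

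To produce the desired $5$- and $6$-enclosing circles, I propose to shift the center from $N$ along the ray toward $E$, setting $O_\alpha=(1/2+\alpha,\,1/2-3\alpha)$ for a small parameter $\alpha>0$ (note $O_\alpha\in\Delta$). A direct expansion yields
\[
|O_\alpha E|^2=\tfrac{10}{4}-10\alpha+10\alpha^2,\qquad
|O_\alpha F|^2=\tfrac{10}{4}-8\alpha+10\alpha^2,\qquad
|O_\alpha G|^2=\tfrac{10}{4}-6\alpha+10\alpha^2,
\]
whereas the other five boundary lattice points each have squared distance $\ge 10/4+10\alpha^2>10/4$ from $O_\alpha$. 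Hence for small $\alpha>0$ we have $|O_\alpha E|<|O_\alpha F|<|O_\alpha G|<\sqrt{10}/2$ with three strictly different values, while the other five boundary points and all farther lattice points remain at distance strictly greater than $\sqrt{10}/2$ from $O_\alpha$.

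Now given $r\in(\sqrt{10}/2-\vep,\sqrt{10}/2)$, write $r^2=10/4-\eta$ with $\eta>0$ small. The inequality $|O_\alpha E|<r<|O_\alpha F|$ rearranges to $\eta/(10-10\alpha)<\alpha<\eta/(8-10\alpha)$, which admits a non-empty window (roughly $\alpha\in(\eta/10,\eta/8)$) once $\eta$ is small; any such $\alpha$ produces a $5$-enclosing circle, which I take as $O_5(r)$. Analogously the inequality $|O_\alpha F|<r<|O_\alpha G|$ has a non-empty window (roughly $\alpha\in(\eta/8,\eta/6)$), giving a $6$-enclosing circle $O_6(r)$. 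The main thing to verify carefully is the ``exactly'' bookkeeping: the five boundary points outside $\{E,F,G\}$ must stay at squared distance $>r^2$ throughout the chosen $\alpha$-range (immediate from their squared distances being $\ge 10/4+10\alpha^2>10/4>r^2$), and the $O(\alpha^2)$ corrections in each window's defining inequalities must not close it up as $\eta\to 0^+$ (easy since both endpoints are $\Theta(\eta)$ while the corrections are $O(\eta^2)$). Beyond this routine check, I do not foresee a serious obstacle.
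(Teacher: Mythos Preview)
Your proposal is correct and is essentially an explicit coordinate instantiation of the perturbation idea the paper points to (namely, that of Theorem~\ref{thm:RnIncrease}): slide the center of the radius-$\sqrt{10}/2$ circle from $N$ toward a boundary lattice point and let the radius drop slightly. Your choice of direction toward $E$ happens to separate $E$, $F$, $G$ at three linearly distinct rates, so one parameter family simultaneously supplies both the $5$- and $6$-enclosing circles, which is a clean touch; the paper simply defers the details to the general argument.
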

\begin{proof}
The proof uses essentially the same idea as the one for Theorem \ref{thm:RnIncrease} in the next section. One can also just use that theorem instead of this ad hoc lemma to prove Theorem \ref{thm:R5-6}.
\end{proof}

By combining the above two lemmas we achieve the following result immediately.
\begin{thm}\label{thm:R5-6}
We have $r_n\prec R_n=\sqrt{10}/2$ for $n=5, 6$.
\end{thm}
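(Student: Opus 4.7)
The plan is to combine Lemmas~\ref{lem:n=5Step1} and \ref{lem:n=5Step2} directly: the first pins the candidate upper bound $\sqrt{10}/2$ and rules out its attainment, while the second shows this value is actually approached from below by circles enclosing exactly $5$ and exactly $6$ lattice points.

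First I would fix $n\in\{5,6\}$ and let
\[
S_n=\{r>0:\text{some circle of radius } r \text{ encloses exactly } n \text{ lattice points}\}.
\]
Lemma~\ref{lem:n=5Step1} says that any circle of radius exactly $\sqrt{10}/2$ encloses either $4$ points or at least $7$ points, and that any circle of radius strictly greater than $\sqrt{10}/2$ encloses at least $7$ points. Hence $\sqrt{10}/2\notin S_n$ and $S_n\subseteq(0,\sqrt{10}/2)$, giving immediately $\sup S_n\le\sqrt{10}/2$ with the supremum (if equal) not attained.

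Next I would invoke Lemma~\ref{lem:n=5Step2}, which supplies, for every $r\in(\sqrt{10}/2-\vep,\sqrt{10}/2)$, a circle of radius $r$ enclosing exactly $5$ lattice points and another enclosing exactly $6$. Thus $S_5$ and $S_6$ each contain a sequence of radii tending to $\sqrt{10}/2$ from below, forcing $\sup S_n=\sqrt{10}/2$. Combined with the previous step, the supremum is not attained, so no largest $n$-enclosing circle exists for $n\in\{5,6\}$. By the definitions of MC, non-MC, and LUBOR from the introduction, this is precisely the statement $r_n\prec R_n=\sqrt{10}/2$ for $n=5,6$.

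The whole argument at this level reduces to a one-line bookkeeping exercise once the two lemmas are in place; the genuine difficulty lives inside those lemmas. Lemma~\ref{lem:n=5Step1} is a finite geometric case analysis, but it is doing the essential work: it shows that the integer-count function $N(r,O)$ (number of lattice points inside the circle of radius $r$ centered at $O\in\Delta$) \emph{skips over} the values $5$ and $6$ as $r$ crosses $\sqrt{10}/2$. Lemma~\ref{lem:n=5Step2}, a continuity/deformation statement, is where the supremum is certified to be sharp; it is essentially a special instance of Theorem~\ref{thm:RnIncrease}, so the main obstacle to a self-contained presentation is verifying that small downward perturbations of the critical configuration of Figure~\ref{fig:R67} can expel exactly one, respectively exactly two, of the seven lattice points while keeping all others interior—something that a short continuity argument in the two parameters (center and radius) handles cleanly.
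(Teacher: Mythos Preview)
Your argument is correct and follows essentially the same route as the paper: invoke Lemma~\ref{lem:n=5Step1} to bound $R_n$ above by $\sqrt{10}/2$ and rule out attainment, then use Lemma~\ref{lem:n=5Step2} (or equivalently Theorem~\ref{thm:RnIncrease}) to push the supremum up to $\sqrt{10}/2$. The paper's own proof is a two-line version of exactly this, and your closing remark that Lemma~\ref{lem:n=5Step2} is a special case of Theorem~\ref{thm:RnIncrease} matches the paper's parenthetical comment verbatim in spirit.
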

\begin{proof}
Lemma \ref{lem:n=5Step1} implies that $R_n\le R_4$ for $n=5, 6$, while Lemma \ref{lem:n=5Step2} (or Theorem \ref{thm:RnIncrease} more precisely) shows that $R_n\ge R_4$, whence the claim in the theorem.
\end{proof}

We can mimic the ideas used in the proof of Theorem \ref{thm:R0-4} and Theorem \ref{thm:R5-6}
to deal with the cases, say, $7\le n\le 40$, by hand. See Section \ref{sec:R40}.

\section{Some general results}\label{sec:general}
In this section, we will provide a few general statements concerning the MC (resp. non-MC) numbers and their corresponding MC-radii (resp. LUBORs). They will provide us a theoretical foundation for our computer search to be carried out in the next section. Recall that a lattice circle has at least three lattice points on its circumference.

\begin{thm}\label{thm:MCLatticeCircle}
If $n$ is MC then its MC-circle $\calM_n$ must be a lattice circle.
\end{thm}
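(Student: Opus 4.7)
The plan is to argue by contradiction. Suppose the MC-circle $\calM_n$ has at most two lattice points on its circumference; I will produce a small perturbation of the pair (center $O$, radius $R_n$) to a nearby pair $(O', R_n + \delta)$ with $\delta > 0$, such that the new disk still encloses exactly the same $n$ lattice points strictly in its interior. This will contradict the maximality of $R_n$. To prepare, let $S$ be the set of the $n$ interior lattice points, so every $Q \in S$ satisfies $|OQ| < R_n$, and every lattice point $Q$ neither in $S$ nor on the circumference satisfies $|OQ| > R_n$. Because only finitely many lattice points lie in any bounded region, the two positive gaps
$$d_- := R_n - \max_{Q \in S} |OQ|, \qquad d_+ := \min\bigl\{|OQ| - R_n : Q \text{ is a lattice point, } Q \notin S,\ Q \notin \calM_n\bigr\}$$
are well-defined. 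Triangle-inequality estimates then show that if $|OO'| < \vep$ for small $\vep>0$ and $\delta < \vep$, then interior points remain interior (controlled by $d_-$) and non-boundary exterior points remain exterior (controlled by $d_+$); the task thus reduces to ensuring the (at most two) boundary lattice points land strictly outside the new disk.

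Next I split into three cases by the number of boundary lattice points of $\calM_n$. If there are none, take $O' = O$ and any $0 < \delta < d_+$. If there is exactly one boundary point $P$, slide the center directly away from $P$: set $O' = O + \vep (O - P)/|OP|$, so a direct calculation gives $|O'P| = R_n + \vep$; then $\delta := \vep/2$ with $\vep$ sufficiently small does the job. If there are exactly two boundary points $P_1, P_2$, then $O$ sits on the perpendicular bisector of $\overline{P_1 P_2}$; move $O$ along this bisector a distance $\vep$ away from the midpoint $M$ of $\overline{P_1 P_2}$ (either direction will do if $O = M$). A short coordinate computation, placing $M$ at the origin and $\overline{P_1 P_2}$ along a coordinate axis, shows that both $|OP_1|$ and $|OP_2|$ are strictly increasing along this ray, so $|O'P_1| = |O'P_2| > R_n$, and again $\delta = \vep/2$ works for $\vep$ small. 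In every case, after applying a rigid motion of the lattice system to return $O'$ into $\Delta$, we obtain an $n$-enclosing circle of radius strictly larger than $R_n$, contradicting the definition of $R_n$.

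The main technical point is choosing $\vep$ uniformly small enough: one needs $\vep < 2 d_-$ and $\vep + \delta < d_+$ to keep interior points interior and non-boundary exterior lattice points outside, which is automatic because the relevant extrema are taken over finite sets. The conceptual reason the argument succeeds is a degrees-of-freedom count: the center $O$ has two free parameters in the plane, so fewer than three boundary lattice points always leaves room to slide $O$ along a direction that simultaneously pushes each boundary point outward while permitting the radius to grow. Three non-collinear boundary lattice points, by contrast, pin $O$ and $R_n$ down uniquely, so there is no freedom to enlarge --- this is precisely the threshold at which an MC-circle must become a lattice circle, and the hardest step of the proof is exactly the two-boundary-point case where one must identify the one remaining degree of freedom and verify that moving along it genuinely increases both radii.
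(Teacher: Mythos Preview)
Your proof is correct and follows essentially the same approach as the paper's: argue by contradiction that with at most two boundary lattice points one can perturb the circle to a strictly larger $n$-enclosing one. The paper states this in a single sentence (increase the size while keeping the at most two boundary points on the circumference), whereas you give a fully detailed three-case analysis with explicit gap quantities $d_-,d_+$ and an explicit choice of perturbation direction; the only cosmetic difference is that you push the boundary points strictly outside rather than keeping them on the new circumference, which is an equivalent way to exploit the same degree-of-freedom count.
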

\begin{proof}
Indeed, if $\calM_n$ has at most two lattice points on its circumference then we can continuously increase its size while keeping both these  point on $\calM_n$ and the number of interior lattice points unchanged. This contradicts the maximality of $\calM_n$.
\end{proof}

\begin{thm}\label{thm:RnIncrease}
Let $\{R_n: n\ge 0\}$ be the positive sequence of MC-radii (for MC $n$) and LUBORs (for non-MC $n$). Then   $\{R_n: n\ge 0\}$ is a non-decreasing sequence.
\end{thm}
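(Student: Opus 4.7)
The plan is to reduce the non-decreasing property to the following operational statement: for every $r < R_n$ there exists an $(n+1)$-enclosing circle of radius strictly greater than $r$. Since $R_{n+1}$ is the supremum of the radii of $(n+1)$-enclosing circles, this gives $R_{n+1} \ge r$ for all such $r$, and letting $r \nearrow R_n$ yields $R_{n+1} \ge R_n$. To produce the required $(n+1)$-enclosing circle, I will start from an $n$-enclosing circle $C$ with center $O$ and radius $r^{*} \in (r, R_n]$, guaranteed to exist by the definition of $R_n$ as a supremum, and then inflate $C$ about $O$ until the first radius $s \ge r^{*}$ at which the circumference meets one or more lattice points $Q_1, \ldots, Q_k$ with $k \ge 1$; the enlarged circle $C_0 = C(O, s)$ still encloses exactly the original $n$ interior lattice points because no lattice point could have crossed the boundary at an intermediate radius in $[r^{*}, s)$.

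If $k = 1$, a further infinitesimal radial expansion sends $Q_1$ into the interior, no other lattice point enters, and we obtain an $(n+1)$-enclosing circle of radius just above $s \ge r^{*} > r$. The delicate case is $k \ge 2$, where several lattice points collide with the boundary simultaneously. The key geometric observation is that $Q_1, \ldots, Q_k$ all lie on a common circle about $O$, so each one is an extreme point of their convex hull; hence I can select a unit vector $\vec v$ such that $\vec v \cdot (Q_1 - O)$ strictly exceeds $\vec v \cdot (Q_j - O)$ for every $j \ne 1$. Writing $\alpha_j := \vec v \cdot (Q_j - O)/s$ and picking a real $\vep_0$ strictly between the second-largest $\alpha_j$ and $\alpha_1$, I will deform to the new center $O' = O + \delta \vec v$ and the new radius $r' = s - \delta \vep_0$ for a small parameter $\delta > 0$.

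The first-order expansion $d(O', Q_j)^{2} = s^{2} - 2\delta s \alpha_j + \delta^{2}$ then shows that $Q_1$ moves strictly inside the deformed circle while every other $Q_j$ moves strictly outside. Because only finitely many lattice points lie in any fixed bounded region, compactness yields positive lower bounds for (a) the clearance of the original $n$ interior points from the boundary of $C_0$, (b) the clearance of all other exterior lattice points from the boundary of $C_0$, and (c) the slack $s - r \ge r^{*} - r > 0$. Combining these into a single smallness condition on $\delta$ produces an $(n+1)$-enclosing circle of radius $r' > r$ as required.

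The main obstacle is precisely the multi-collision case $k \ge 2$, where the simultaneous boundary tie among several lattice points must be broken. The extreme-point argument supplies a perturbation direction in which exactly one of them becomes the closest, and after that the three smallness estimates on $\delta$ are routine. I do not expect any additional difficulty in the case $k = 1$ or in the final bookkeeping once the direction $\vec v$ is chosen.
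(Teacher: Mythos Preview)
Your proof is correct and follows essentially the same route as the paper's: start from an $n$-enclosing circle of radius close to $R_n$, arrange for at least one lattice point to sit on its boundary, and then perturb the center so as to absorb exactly one boundary lattice point while keeping the interior count and losing at most an infinitesimal amount of radius. The paper simply takes $\vec v$ to point from $O$ toward a chosen boundary point $A$ (your $\alpha_1=1$ case) and splits the perturbation into two tiny moves, whereas you handle the multi-collision case via the extreme-point/separating-direction argument in a single step; the underlying idea is the same.
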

\begin{proof}
We prove that $R_n\le R_{n+1}$ for any integer $n\ge 0$.

Choose $C_n$ to be the large $n$-enclosing circle $\calM_n$ if $n$ is MC by Theorem \ref{thm:MCLatticeCircle} and let $r=R_n$. If $n$ is non-MC then let $C_n$ be any circle of radius $r<R_n$ such that $C_n$ encloses $n$ points. In the latter case, by continuously deforming $C_n$ while keeping $r$ nondecreasing we may assume $C_n$ contains at least one lattice point $A$ on its circumference. Indeed, if there is no lattice point on $C_n$ then let $A$ be one of the lattice points outside $C_n$ that is closest to the center of the circle with the radial distance to the its boundary equal to $d$. Next, we enlarge the radius of $C_n$ by $2d/3$ and then move the circle towards $A$ by $d/3$ so that $A$ lies on the circumference of $C_n$ while the number of interior lattice points is unchange. This is possible since the smallest distance of any point outside $C_n$ is at least $d$, and an increasing of radius by $2d/3$ followed by a move of the circle center by a distance $d/3$ will not touch the point unless the final move is directly towards it.

We next show that we can deform $C_n$ by an infinitesimal amount $\vep>0$ and then move it to include one more point in its interior.

Let $A$ be a lattice point on $C_n$. While keeping $A$ on the circumference of $C_n$ we may pull the center $O$ of $C_n$ towards $A$ by an infinitesimal amount $\vep>0$ so that $A$ becomes the only lattice point on the new circle $C_n(\vep)$ and the number of lattice points inside $C_n(\vep)$ is still equal to $n$. A tiny perturbation of center of $C_n(\vep)$ along the same direction while keeping the same radius $r-\vep$ unchanged will now include $A$ in its interior.

This clearly implies that $R_{n+1}\ge r-\vep$ by the definition of $R_{n+1}$ either as an MC-radius or a LUBOR. Since $r=R_n$ if $n$ is MC and $r<R_n$ but can be arbitrarily close to $R_n$ if $n$ is non-MC, by taking $\vep\to 0$ we arrive at the desired conclusion $R_{n+1}\ge R_n$.
\end{proof}

\begin{thm}\label{thm:Non-MCLatticeCircle}
Let $n$ be non-MC and let $R_n$ be its LUBOR. Then there is a lattice circle centered at $\calO\in\Delta$ with radius $R_n$ and a sequence of circles centered at $\calO_j\in\Delta$ of radius $\rho_j$ ($j\ge 1$) enclosing the same set of $n$ lattice points such that the limit $\lim_{j\to \infty} \rho_j=R_n$.
\end{thm}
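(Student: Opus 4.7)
The plan is to combine a compactness extraction with a local perturbation argument in the style of the proof of Theorem \ref{thm:RnIncrease}. The definition of $R_n$ as a LUBOR supplies a natural candidate for the limit circle; the delicate point is to show that this limit circle must be a lattice circle.

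By definition of LUBOR, I can choose a sequence of $n$-enclosing circles whose radii $\rho_j$ increase to $R_n$ and, after applying grid-invariant isometries, whose centers $\calO_j$ lie in the compact triangle $\Delta$. Each such circle has bounded radius, so its enclosed $n$ lattice points lie in a fixed bounded region; only finitely many $n$-element sets of lattice points are possible, and pigeonhole furnishes a subsequence along which all circles enclose one common set $S$. A further subsequence gives $\calO_j \to \calO$ for some $\calO \in \Delta$ by compactness. This already yields the sequence claimed in the theorem and a candidate limit circle $C$ centered at $\calO$ with radius $R_n$. Passing the defining inequalities to the limit shows that $S$ is contained in the closed disk of $C$ and that no lattice point outside $S$ lies in the open disk of $C$.

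It remains to show that $C$ is a lattice circle. Let $m$ be the number of lattice points on the circumference of $C$ and $k$ the number of those belonging to $S$. Since $n$ is non-MC, the value $R_n$ is not attained as the radius of any $n$-enclosing circle, so $C$ itself cannot be $n$-enclosing; equivalently, $k \ge 1$. Suppose for contradiction that $m \le 2$. I will construct a perturbation $(\Delta \calO, \Delta r)$ with $\Delta r > 0$ producing a circle of radius $R_n + \Delta r$ that still encloses exactly $S$, contradicting the definition of $R_n$. For small enough perturbations, any lattice point of $S$ in the open disk of $C$ and any lattice point strictly outside $C$ remain on the correct side, so only the boundary lattice points $A_1, \dots, A_m$ require attention. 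To first order, the required conditions are $\vec u_i \cdot \Delta \calO < \Delta r$ when $A_i \in S$ and $\vec u_i \cdot \Delta \calO > \Delta r$ when $A_i \notin S$, where $\vec u_i$ is the unit vector from $\calO$ to $A_i$. With $m \le 2$ such linear conditions plus $\Delta r > 0$, the three parameters in $(\Delta \calO, \Delta r)$ are more than enough to find a consistent solution, which contradicts the maximality of $R_n$ and forces $m \ge 3$.

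The main obstacle is the case analysis in the final step. The subcases $m = 0$ and $(m,k) \in \{(1,0),(2,0)\}$ are excluded by $k \ge 1$; the subcases $(m,k) \in \{(1,1),(2,2)\}$ are easy, e.g., keep $\calO$ fixed and simply increase $r$ slightly. The delicate subcase is $(m,k) = (2,1)$, where the two boundary points must be pushed to opposite sides of the new circle; here one takes $\Delta \calO$ parallel to $\vec u_2 - \vec u_1$, which is nonzero because two distinct points on the circumference of $C$ yield distinct unit vectors, and then chooses $\Delta r$ in the resulting positive gap.
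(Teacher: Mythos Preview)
Your proof is correct and follows the same route as the paper's: pigeonhole on the finitely many possible $n$-point configurations, compactness of $\Delta$ to extract a limiting center $\calO$, and then a perturbation argument in the spirit of Theorem~\ref{thm:MCLatticeCircle} to force at least three lattice points on the limit circle. Your $(m,k)$ case analysis is more explicit than the paper's one-line invocation of Theorem~\ref{thm:MCLatticeCircle} and correctly handles the extra subtlety (glossed over there) that some boundary lattice points of the limit circle belong to $S$ and must be pushed \emph{inward} rather than merely kept on the circumference.
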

\begin{proof}
By rigid motions, we may assume without loss of generality that all circles we consider have centers in $\Delta$. It is clear that there are infinitely many $n$-enclosing circles by continuity argument. By the definition of $R_n$, there exists a sequence of such circles whose radii approach $R_n$ in the limit. Moreover, there are only finitely many different configurations of $n$ lattice point sets inside these circles. Therefore, there is a subsequence of infinitely many such circles whose radii approach $R_n$ in the limit and which contain the \textbf{same} set of $n$ lattice points. Suppose on the contrary the radius $R_n$ circle is not a lattice circle, then the same argument as used in the proof of Theorem \ref{thm:MCLatticeCircle} implies that $R_n$ is not an upper bound of the radii of all $n$-enclosing circles, which contradicts the assumption.
\end{proof}

\begin{rem}
Notice that the circle $\calO_j$ in Theorem \ref{thm:Non-MCLatticeCircle} cannot be lattice circles for all sufficiently large $j$ by discreteness of lattice circles. And the circle $\calO$ must not be the largest $n$-enclosing lattice circle; otherwise, it would contradicts to the condition that $n$ is non-MC.
\end{rem}

The next corollary quickly follows from Theorems \ref{thm:MCLatticeCircle} and \ref{thm:Non-MCLatticeCircle}.
\begin{cor}\label{cor:RnAllLatticeCircleRadius}
For every nonnegative integer $n$, either as an MC-radius or as a LUBOR, $R_n$ is the radius of some lattice circle.
\end{cor}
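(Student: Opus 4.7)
The plan is to handle the two cases separately according to whether $n$ is MC or non-MC, and in each case invoke the appropriate theorem that was just proved. This is a routine combination, so the proposal is short.

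First I would split into cases on the status of $n$. If $n$ is MC, then by definition $R_n$ is the radius of the MC-circle $\calM_n$. Theorem \ref{thm:MCLatticeCircle} asserts that $\calM_n$ has at least three lattice points on its circumference, which is precisely the defining property of a lattice circle; so $R_n$ is realized as the radius of a lattice circle in this case.

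If instead $n$ is non-MC, then $R_n$ is the LUBOR. Theorem \ref{thm:Non-MCLatticeCircle} explicitly produces a lattice circle centered at some $\calO\in\Delta$ whose radius equals $R_n$; this gives the desired realization. Combining the two cases yields the corollary.

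There is no substantive obstacle here, since both prior theorems have already done the real work (Theorem \ref{thm:MCLatticeCircle} by a maximality/deformation argument, Theorem \ref{thm:Non-MCLatticeCircle} by a compactness/pigeonhole extraction of a subsequence of $n$-enclosing circles sharing the same lattice configuration). The only thing to be careful about is the notational convention already fixed in the paper that $R_n$ denotes the MC-radius in the first case and the LUBOR in the second, so that the two sub-statements really do cover every nonnegative integer $n$; this is immediate from the definitions.
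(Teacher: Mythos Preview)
Your proposal is correct and matches the paper's approach exactly: the paper simply states that the corollary ``quickly follows from Theorems~\ref{thm:MCLatticeCircle} and~\ref{thm:Non-MCLatticeCircle},'' which is precisely your case split on MC versus non-MC invoking each theorem in turn.
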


\begin{thm}\label{thm:FindNonMaxRadius}
Suppose $k<\ell$ are two consecutive MC numbers and $n$ is non-MC such that $k<n<\ell$. Then $R_n=R_k$, namely, $R_k$ is the LUBOR for all such non-MC $n$ immediately following $k$.
\end{thm}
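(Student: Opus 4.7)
The plan is to argue by contradiction combined with an infinite descent. By Theorem \ref{thm:RnIncrease} we have $R_k\le R_n$ for free, so suppose toward contradiction that $R_k<R_n$.

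The core ingredient is the following one-step reduction: whenever $n'$ is non-MC and $R_{n'}>R_k$, there exists an integer $m'$ with $k<m'<n'$ and $R_{m'}=R_{n'}$. To establish this, I would apply Theorem \ref{thm:Non-MCLatticeCircle} to $n'$ to obtain a lattice circle $\calO$ of radius $R_{n'}$, constructed as the limit of $n'$-enclosing circles $\calO_j$ sharing a common interior lattice-point set $S$ of cardinality $n'$. Let $m'$ be the number of lattice points strictly inside $\calO$. Any such point lies strictly inside $\calO_j$ for all large $j$ by continuity, hence belongs to $S$; thus $m'\le n'$. The case $m'=n'$ is ruled out, because then $\calO$ itself would be an $n'$-enclosing circle attaining the supremum $R_{n'}$, making it a largest $n'$-enclosing circle and contradicting that $n'$ is non-MC. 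So $m'\le n'-1$. Now shrink $\calO$ concentrically by a small $\vep>0$: this ejects the (at least three) lattice points on the circumference of $\calO$ to the exterior while preserving all $m'$ strictly interior lattice points, producing an $m'$-enclosing circle of radius $R_{n'}-\vep$. Hence $R_{m'}\ge R_{n'}-\vep$ for every $\vep>0$, and combined with $R_{m'}\le R_{n'}$ (monotonicity, since $m'<n'$) this gives $R_{m'}=R_{n'}$. Finally, $R_{m'}=R_{n'}>R_k$ together with monotonicity prevents $m'\le k$, so $m'>k$.

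With the reduction in hand the descent is immediate. Applying it to $n'=n$ produces $m_1$ with $k<m_1<n<\ell$, so $m_1$ is non-MC (because $k$ and $\ell$ are consecutive MC integers) and $R_{m_1}=R_n>R_k$, which licenses another application of the reduction. Iterating yields an infinite strictly decreasing sequence $n>m_1>m_2>\cdots$ of integers all exceeding $k$, which is impossible. Hence $R_n\le R_k$, and combined with $R_k\le R_n$ we conclude $R_n=R_k$.

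The step most in need of care is the bound $m'\le n'-1$: the limit lattice circle $\calO$ can carry many lattice points on its circumference, and one must carefully separate strictly interior from boundary lattice points, and in particular exploit that $\calO$ is constructed as a genuine limit of $n'$-enclosing circles sharing a common interior set. Once this bookkeeping is done, the rest of the proof is a clean, finite descent on the integers.
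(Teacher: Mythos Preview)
Your proof is correct and follows essentially the same approach as the paper: both invoke Theorem~\ref{thm:Non-MCLatticeCircle} to obtain a limiting lattice circle $\calO$ of radius $R_n$ whose interior lattice-point count $m$ satisfies $m<n$ (since $n$ is non-MC), and then compare $R_n$ with $R_m$ via monotonicity---the paper organizes this as an induction on $n-k$ while you phrase the same step as a descent. As a minor simplification, your shrinking step is unnecessary, since $\calO$ itself is already an $m'$-enclosing circle of radius exactly $R_{n'}$, giving $R_{m'}\ge R_{n'}$ directly.
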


\begin{proof}
Let $n=k+j$ be a non-MC given as in the theorem, $j=1,\dots,\ell-k-1$. We now prove the theorem by induction on $j$.

If $j=1$, by definition of the LUBOR $R_n$, if $R_n>R_k$ then there is some $r>R_k$ such that a circle of radius $r$ encloses exactly $n$ lattice points. By Theorem \ref{thm:Non-MCLatticeCircle}, $R_n$ is the radius of some lattice circle $\calO$ and there is a sequence of circles $\calO_j$ enclosing the same set $S$ of $n$ lattice points with radii $\rho_j\to R_n$. Notice that in the limit the circle $\calO$ cannot enclose any other points than $S$ but it might lose some in $S$ to its boundary. Therefore, the number of lattice points in $\calO$, say $m$, must be no more than $n$. Since $n$ is non-MC, $\calO$ cannot contain exactly $n$-points, that is, $m\le n-1=k$. Thus its radius $R_n\le R_m\le R_k$ by the definition of $R_m$ and Theorem \ref{thm:RnIncrease}. But $R_n\ge R_k$ by the same theorem as $n>k$ and therefore $R_n=R_k$.

Suppose $j>1$ and suppose $R_{k+1}=\cdots=R_{k+j-1}=R_k$. Then by the same argument as above we see that $R_n\le R_m$ for some $m\le n-1$. By induction assumption and Theorem \ref{thm:RnIncrease}, we see immediately that
$R_n\le R_k$. As we already know $R_n\ge R_k$ by Theorem \ref{thm:RnIncrease} since $n>k$, we must have $R_n= R_k$. This completes the proof of the theorem.
\end{proof}

\begin{cor}\label{cor:NonMCradiiStrictDecrease}
Suppose $n>k$ and $k$ is an MC number and all the numbers between $k$ and $n$ (exclusive) are non-MC. Then the radius of the largest $n$-enclosing \textbf{lattice} circle is \textbf{strictly} smaller than $R_k$ if and only if $n$ is non-MC.
\end{cor}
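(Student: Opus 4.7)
The plan is to prove the two implications separately, using Theorems~\ref{thm:MCLatticeCircle}, \ref{thm:RnIncrease}, and \ref{thm:FindNonMaxRadius} established earlier in the section.

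For the ``only if'' direction I would argue by contrapositive. If $n$ is MC, then by Theorem~\ref{thm:MCLatticeCircle} its MC-circle $\calM_n$ is itself an $n$-enclosing lattice circle of radius $R_n$, and by Theorem~\ref{thm:RnIncrease} we have $R_n \ge R_k$. Hence the largest $n$-enclosing lattice circle has radius at least $R_n \ge R_k$, which is \emph{not} strictly smaller than $R_k$.

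For the ``if'' direction, suppose $n$ is non-MC. I would first invoke Theorem~\ref{thm:FindNonMaxRadius} to conclude $R_n = R_k$; note that the inductive proof of that theorem only requires that $k$ be MC and that $k+1,\dots,n$ all be non-MC, which is precisely our hypothesis, so the conclusion is valid in our setting even without assuming the existence of a further MC number $\ell > n$. The decisive observation is then essentially tautological: by the definition of ``non-MC,'' the LUBOR $R_n$ is a supremum that is \emph{not} attained by any $n$-enclosing circle, for otherwise such a circle would itself witness MC-ness of $n$. In particular no $n$-enclosing lattice circle has radius equal to $R_n$, while by the definition of LUBOR every $n$-enclosing circle has radius at most $R_n$. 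Combining these, every $n$-enclosing lattice circle has radius strictly less than $R_n = R_k$, and so the largest one does as well.

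To make the statement fully meaningful, I would also observe that the maximum over $n$-enclosing lattice circles is actually realized: lattice circles of bounded radius form a discrete set up to rigid motion (each is determined by three concyclic lattice points with bounded pairwise distances), so among $n$-enclosing lattice circles of radius below $R_k$ there are only finitely many up to rigid motion and the maximum radius is attained. The one subtle point that needs careful verification is the applicability of Theorem~\ref{thm:FindNonMaxRadius} to our setting without assuming an MC number $\ell > n$; once that is dispatched, the rest is a clean combination of the earlier theorems with the definition of non-MC.
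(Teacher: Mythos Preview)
Your proof is correct and follows essentially the same route as the paper's own proof: the MC case uses Theorem~\ref{thm:MCLatticeCircle} together with Theorem~\ref{thm:RnIncrease}, while the non-MC case combines Theorem~\ref{thm:FindNonMaxRadius} with the observation that the LUBOR is not attained. Your version is in fact more explicit than the paper's, which simply writes ``$r<R_n=R_k$'' without spelling out why strict inequality holds; your remark that the inductive proof of Theorem~\ref{thm:FindNonMaxRadius} does not actually require the existence of a further MC number $\ell>n$ is also a valid point that the paper silently assumes.
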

\begin{proof}
Suppose $C$ is the largest $n$-enclosing \textbf{lattice} circle with radius $r$. If $n$ is MC then $C$ is an MC-circle of $n$ and therefore its radius $r=R_n\ge R_k$ by Theorem \ref{thm:RnIncrease}. On the other hand, if $n$ is non-MC then $r<R_n=R_k$ by Theorem \ref{thm:FindNonMaxRadius}.
\end{proof}

\begin{defn}
For any MC number $n$, the length of the longest consecutive non-MC number sequence immediately following $n$ is called its \emph{impacting index}, denoted by $I_n$. We call an MC number \emph{strong} if $I_n>0$.
\end{defn}

\begin{eg}
For example, 4 is a strong MC number with $I_4=2$ since both 5 and 6 are non-MC.  We have seen in Theorem \ref{thm:FindNonMaxRadius} that strong MC numbers play the pivotal role in determining the LUBORs of non-MC numbers.
\end{eg}

\section{More complicated cases: $19\le n\le 40$}\label{sec:R40}

The following theorem covers the cases $19\le n\le 26$.
\begin{thm}\label{thm:R19-26}
We have the following:
\begin{figure}[H]
\centering
  \includegraphics[scale=0.5]{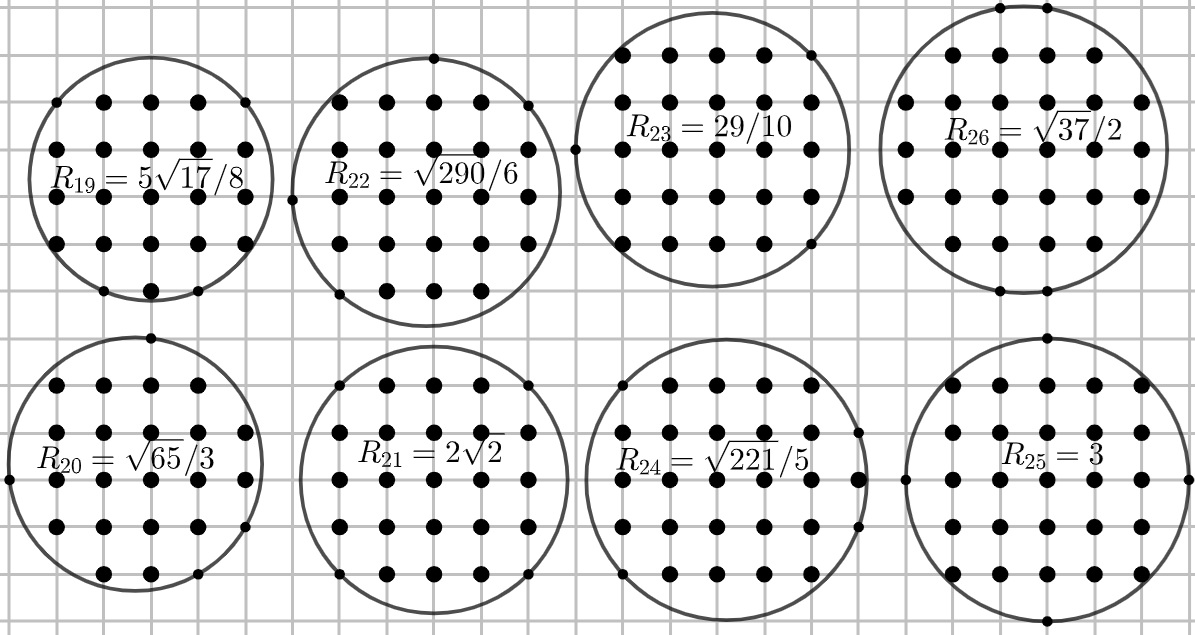}
\caption{The largest circles enclosing 19 to 26 lattice points.}
\label{fig:R19-26}
\end{figure}
\end{thm}
\begin{proof}
We only need to prove each number in Figure~\ref{fig:R19-26} is an upper bound of the corresponding $R_n$.

$n=19$. By the left picture of Figure~\ref{fig:R19Proof}, we see easily that $O$ encloses the 18 points $L_j, 1\le j\le 18$ if its radius is greater than $5\sqrt{17}/8$. By the same argument in case $n=13$, either $X'$ or $Y'$ is in the interior of the circle. Now to find the 20th point, we need to consider the right picture of Figure~\ref{fig:R19Proof} in which the triangle $\triangle XYZ$ has circumradius $5\sqrt{13}/7<5\sqrt{17}/8$. Since the triangle $\triangle CML_6\subseteq \triangle XYZ$ we see immediately that at least one of the distances $|OX|$, $|OY|$ or $|OZ|$ is at most $5\sqrt{13}/7<5\sqrt{17}/8$. This implies that either $X$, or $Y$, or $Z$ lies inside $O$. Thus $O$ encloses at least 20 points.

\begin{figure}[H]
\centering
  \includegraphics[scale=0.8]{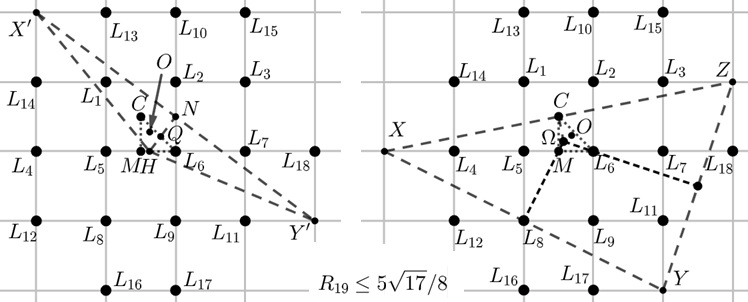}
\caption{Upper bound of $R_{19}$.}
\label{fig:R19Proof}
\end{figure}

$n=20$. By the three pictures of Figure~\ref{fig:R20Proof}, we see easily that $O$ encloses the 18 points $L_j, 1\le j\le 18$ if its radius is greater than $\sqrt{65}/3$. By the same argument in case $n=13$, either $X_k$ or $Y_k$ is in the interior of the circle for each of $k=1,2,3$. Thus $O$ encloses at least 21 points.

\begin{figure}[H]
\centering
  \includegraphics[scale=0.8]{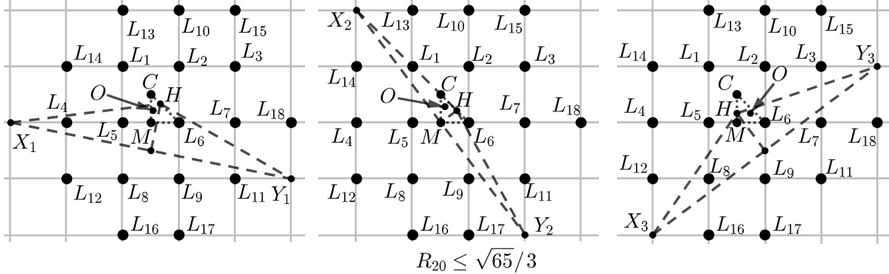}
\caption{Upper bound of $R_{20}$.}
\label{fig:R20Proof}
\end{figure}

$n=21$. By the left two pictures of Figure~\ref{fig:R21-22Proof}, we see easily that $O$ encloses the 20 points $L_j, 1\le j\le 20$ if its radius is greater than $2\sqrt{2}$. By the same argument in case $n=20$, either $X_k$ or $Y_k$ is in the interior of the circle for each of $k=1,2$. Thus $O$ encloses at least 22 points.

\begin{figure}[H]
\centering
  \includegraphics[scale=0.54]{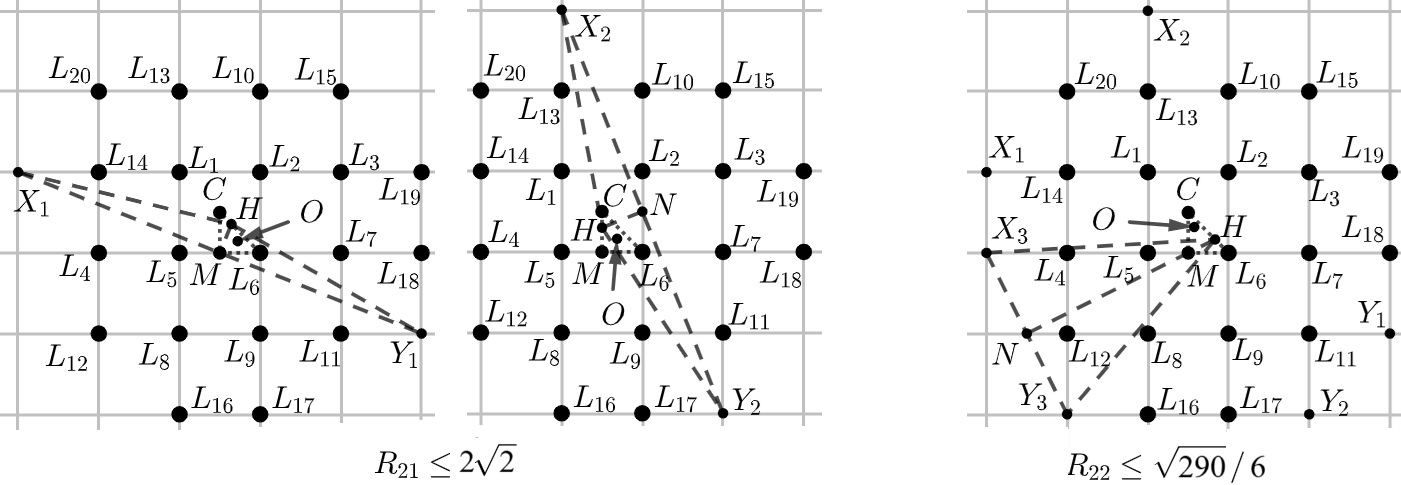}
\caption{Upper bound of $R_{21}$ and $R_{22}$.}
\label{fig:R21-22Proof}
\end{figure}

$n=22$ and $n=23$. When $n=21$, we have seen that $O$ encloses the 20 points $L_j, 1\le j\le 20$ if its radius is greater than $\sqrt{290}/6>2\sqrt{2}$, and either $X_k$ or $Y_k$ is in the interior of the circle for each of $k=1,2$. If $n=22$, by the third picture of Figure~\ref{fig:R21-22Proof}, we can check that either $X_3$ or $Y_3$ is in the interior of the circle. Thus $O$ encloses at least 23 points.
If $n=23$, by the two pictures of Figure~\ref{fig:R23Proof}, either $X_k$ or $Y_k$ is in the interior of the circle for each of $k=3,4$. Thus $O$ encloses at least 24 points.

\begin{figure}[H]
\centering
  \includegraphics[scale=0.5]{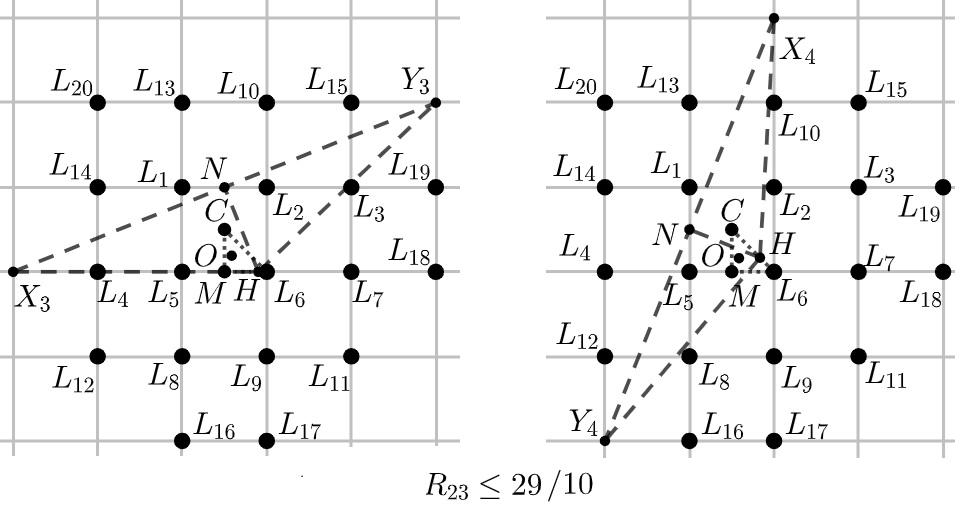}
\caption{Upper bound of $R_{23}$.}
\label{fig:R23Proof}
\end{figure}

$n=24$. It clear in the left picture of Figure~\ref{fig:R24-25Proof} that $O$ encloses the 23 points $L_j, 1\le j\le 23$ if its radius is greater than $\sqrt{221}/5$, and either $X_k$ or $Y_k$ is in the interior of the circle for each of $k=1,2$. Thus $O$ encloses at least 25 points.

\begin{figure}[H]
\centering
  \includegraphics[scale=0.5]{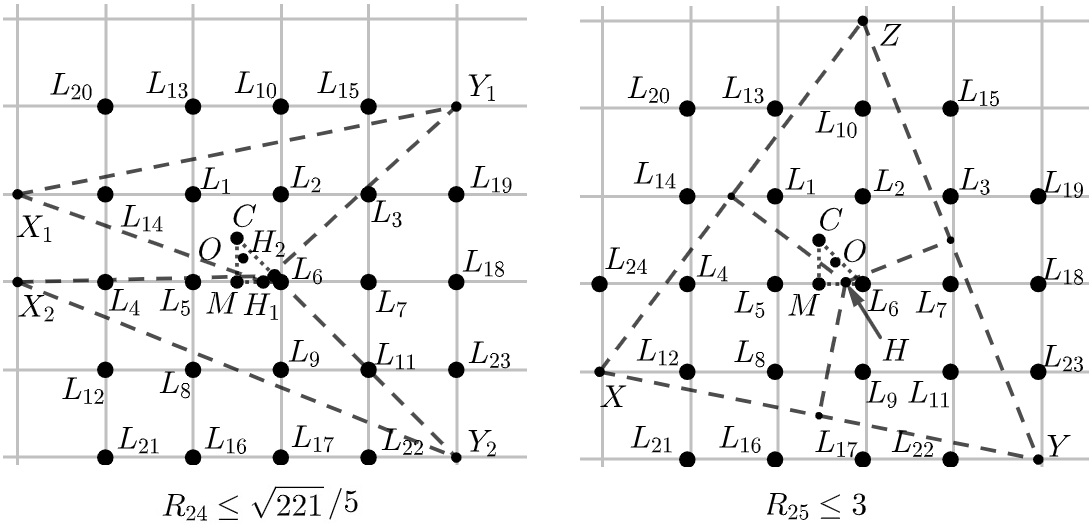}
\caption{Upper bound of $R_{24}$ and $R_{25}$.}
\label{fig:R24-25Proof}
\end{figure}

$n=25$. In this case, a \emph{new idea} must be applied for the first time in the paper. We see in the right picture of Figure~\ref{fig:R24-25Proof} that $O$ encloses the 24 points $L_j, 1\le j\le 24$ if its radius is greater than $3$, and from the proof in case $n=24$, either $X_1$ or $Y_1$ is in the interior of the circle. But where is the 26th point? It turns out the circumradius of $\triangle XYZ$ in Figure~\ref{fig:R24-25Proof} is $5\sqrt{754}/46<3$ (and incidentally, the circumcenter $H$ is not on the grid line albeit very close). Therefore, $O$ must enclose one of three points $X$, $Y$, or $Z$. In summary, $O$ encloses at least 26 points.

\begin{figure}[H]
\centering
  \includegraphics[scale=0.5]{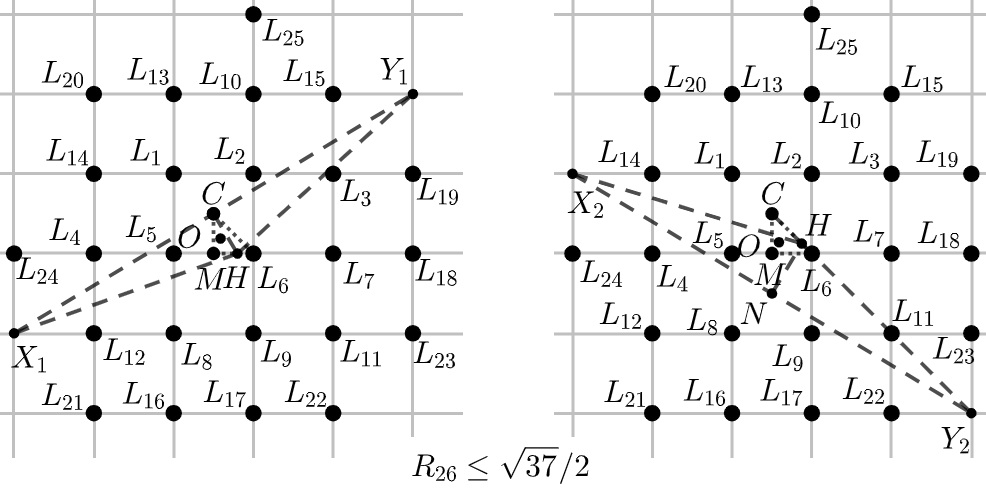}
\caption{Upper bound of $R_{26}$.}
\label{fig:R26Proof}
\end{figure}
$n=26$. It clear in Figure~\ref{fig:R26Proof} that $O$ encloses the 25 points $L_j, 1\le j\le 25$ if its radius is greater than $\sqrt{37}/2$, and either $X_k$ or $Y_k$ is in the interior of the circle for each of $k=1,2$. Thus $O$ encloses at least 27 points.
\end{proof}

The following theorem covers the cases $27\le n\le 32$.

\begin{thm}\label{thm:R27-32}
We have the following:
\begin{figure}[H]
\centering
  \includegraphics[scale=0.5]{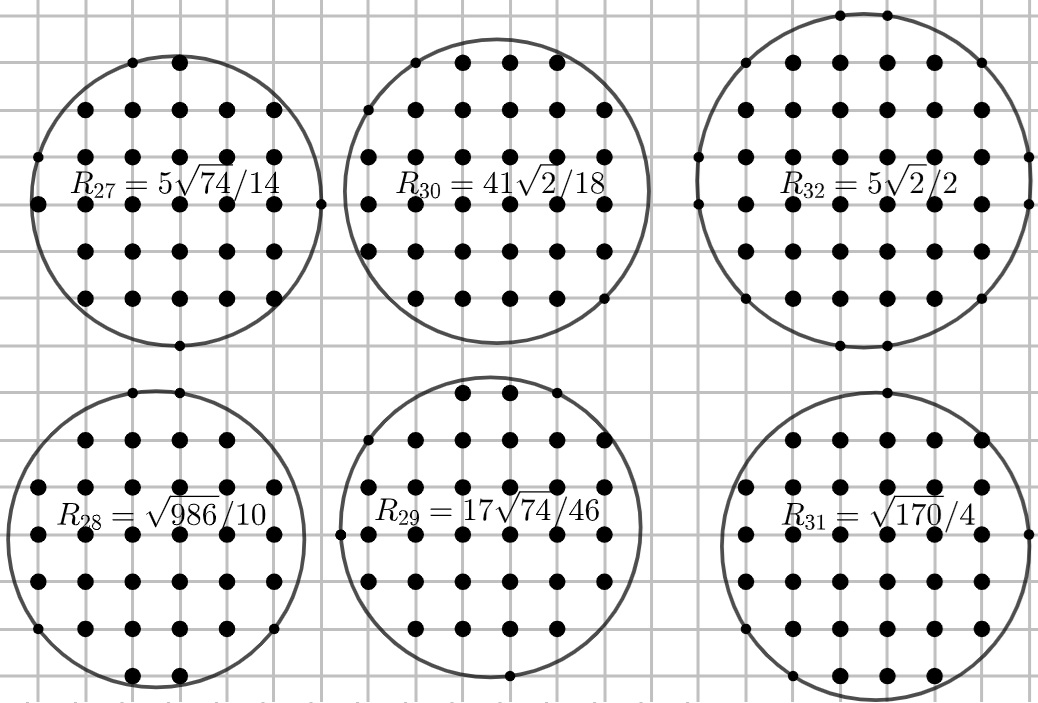}
\caption{The largest circles enclosing 27 to 32 lattice points.}
\label{fig:R27-32}
\end{figure}
\end{thm}

\begin{proof}

$n=27$. It clear in Figure~\ref{fig:R26Proof} that $O$ encloses the 25 points $L_j, 1\le j\le 25$ if its radius is greater than $5\sqrt{74}/14>\sqrt{37}/2$. Furthermore, by Figure~\ref{fig:R27Proof}, either $X_k$ or $Y_k$ is in the interior of the circle for each of $k=1,2$.  And the circumradius $R=\sqrt{6290}/26\approx 3.05$ of $\triangle XYZ$ satisfies $R<5\sqrt{74}/14\approx 3.07$.Thus $O$ encloses at least 28 points.

\begin{figure}[H]
\centering
  \includegraphics[scale=0.56]{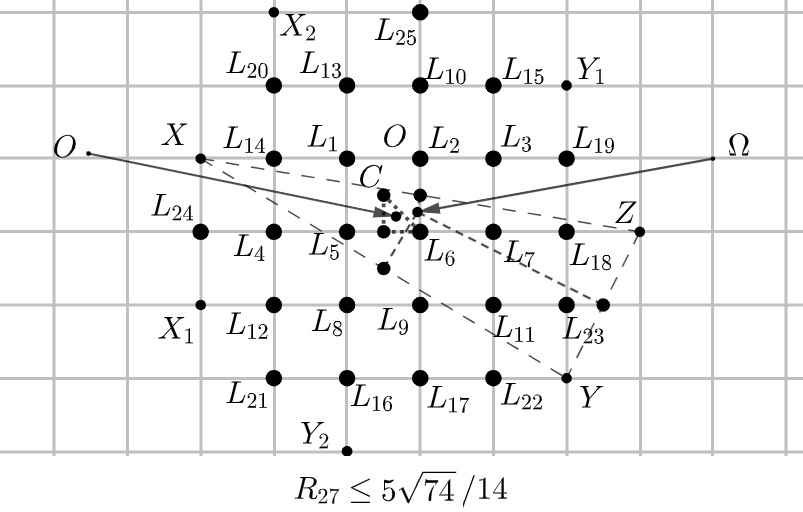}
\caption{Upper bound of $R_{27}$.}
\label{fig:R27Proof}
\end{figure}

$n=28$.
It clear in Figure~\ref{fig:R26Proof} that $O$ encloses the 25 points $L_j, 1\le j\le 25$ if its radius is greater than $\sqrt{85}/3>\sqrt{37}/2$, and either $X_1$ or $Y_1$ is in the interior of the circle. Now by Figure~\ref{fig:R28Proof} either $X_k$ or $Y_k$ is in the interior of the circle for each of $k=2,3,4$. Thus $O$ encloses at least 29 points.

\begin{figure}[H]
\centering
  \includegraphics[scale=0.75]{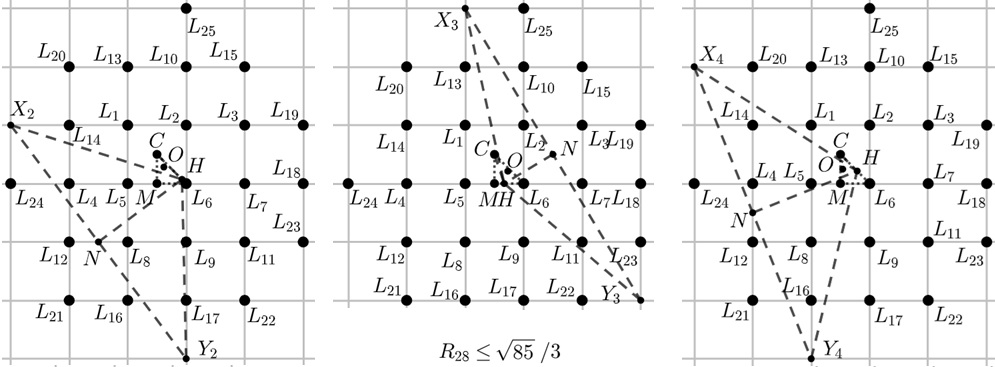}
\caption{Upper bound of $R_{28}$.}
\label{fig:R28Proof}
\end{figure}

$n=29$.
By Figure~\ref{fig:R29Proof}, $O$ encloses the 28 points $L_j, 1\le j\le 28$ if its radius is greater than $17\sqrt{74}/46$, and either $X_1$ or $Y_1$ is in the interior of the circle. Furthermore, the circumradius of $\triangle XYZ$ in Figure~\ref{fig:R29Proof} is $17\sqrt{74}/46$ so one of the three vertices $X$, $Y$ and $Z$ is in  the interior of the circle. Thus $O$ encloses at least  30 points.

\begin{figure}[H]
\centering
  \includegraphics[scale=0.5]{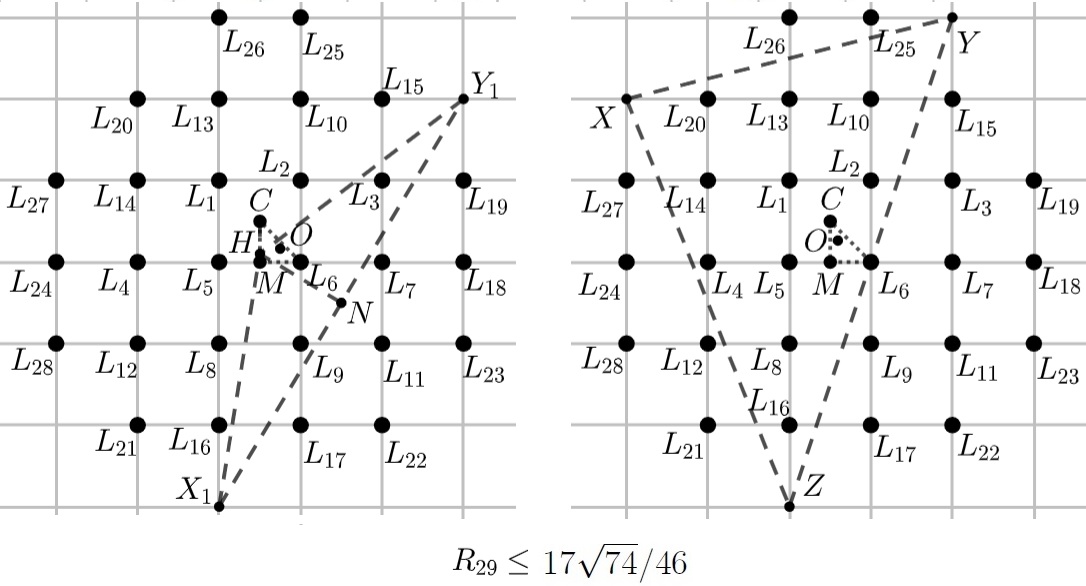}
\caption{Upper bound of $R_{29}$.}
\label{fig:R29Proof}
\end{figure}

$n=30$.
It clear in Figure~\ref{fig:R29Proof} that $O$ encloses the 28 points $L_j, 1\le j\le 28$ if its radius is greater than $41\sqrt{2}/18>17\sqrt{74}/46$, and either $X_1$ or $Y_1$ is in the interior of the circle. Now by the left picture of Figure~\ref{fig:R30Proof} either $X_2$ or $Y_2$ is in the interior of the circle. By the right picture of Figure~\ref{fig:R30Proof} the circumradius of $\triangle XYZ$ in Figure~\ref{fig:R29Proof} is $41\sqrt{2}/18$ so one of the three vertices $X$, $Y$ and $Z$ is in  the interior of the circle. Thus $O$ encloses at least 29 points.

\begin{figure}[H]
\centering
  \includegraphics[scale=0.7]{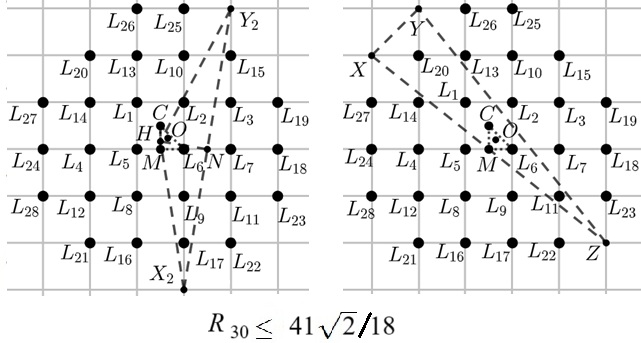}
\caption{Upper bound of $R_{30}$.}
\label{fig:R30Proof}
\end{figure}

$n=31$.
It clear in Figure~\ref{fig:R31Proof} that $O$ encloses the 29 points $L_j, 1\le j\le 29$ if its radius is greater than $\sqrt{170}/4$, and either $X_k$ or $Y_k$ is in the interior of the circle for each of $k=1,2,3$. Thus $O$ encloses at least 32 points.

\begin{figure}[H]
\centering
  \includegraphics[scale=0.55]{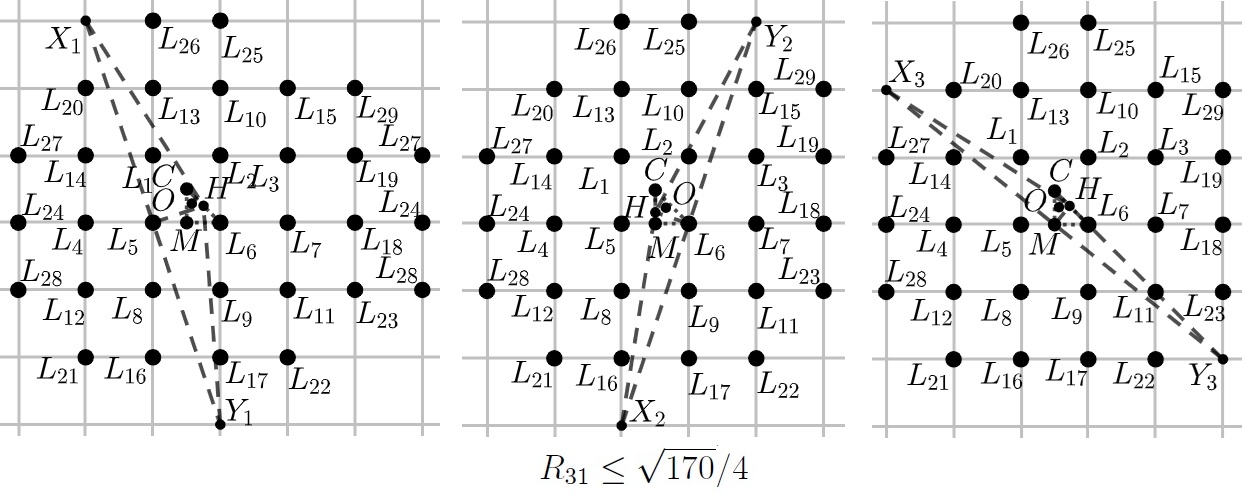}
\caption{Upper bound of $R_{31}$.}
\label{fig:R31Proof}
\end{figure}

$n=32$.
By Figure~\ref{fig:R32Proof} $O$ encloses the 34 points $L_j, 1\le j\le 34$ if its radius is greater than $5\sqrt{2}/2$, and either $X_k$ or $Y_k$ is in the interior of the circle for each of $k=1,2,3$. Thus $O$ encloses at least 37 points.
\end{proof}

\begin{figure}[H]
\centering
  \includegraphics[scale=0.5]{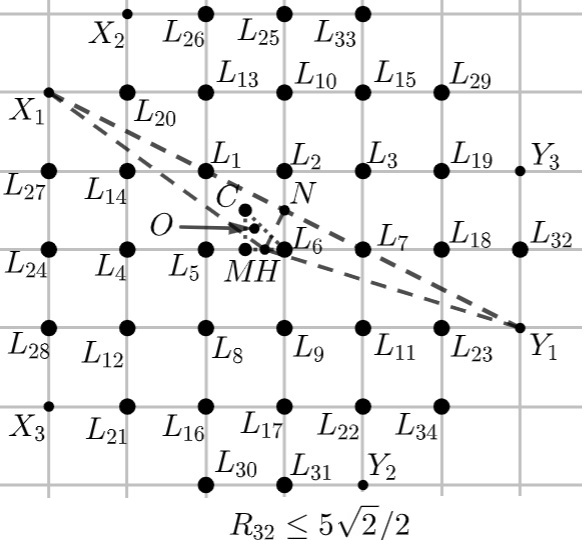}
\caption{Upper bound of $R_{32}$.}
\label{fig:R32Proof}
\end{figure}

It turns out that $n=33,34,35,36$ are all non-MC.
\begin{lem}\label{lem:n=33Step2}
For any real number $r\ge5\sqrt{2}/2$, every circle of radius $r$ must
enclose exactly $32$ lattice points or at least $37$ lattice points.
\end{lem}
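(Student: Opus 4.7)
The proof will parallel Lemma \ref{lem:n=5Step1}. I reduce to $O \in \Delta$ by rigid motions and denote the center of the MC-circle $\calM_{32}$ by $P=(1/2,1/2)$, which will be verified during the argument. The dichotomy splits by whether $r>5\sqrt{2}/2$ or $r=5\sqrt{2}/2$, and in the latter case whether $O=P$ or $O\ne P$.

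For $r>5\sqrt{2}/2$ and any $O\in\Delta$, the case $n=32$ in the proof of Theorem \ref{thm:R27-32} already shows that at least $37$ lattice points are strictly enclosed, so all the content of the present lemma lies at $r=5\sqrt{2}/2$. When additionally $O=P$, the lattice points on the boundary circle are the solutions of $(2a-1)^2+(2b-1)^2=50$, namely the twelve points obtained from $(\pm 1,\pm 7)$, $(\pm 7,\pm 1)$, $(\pm 5,\pm 5)$ after dividing by $2$ and translating by $P$. The closed disk then contains $44$ lattice points by \eqref{equ:NumberOfPt}, so exactly $44-12=32$ lie strictly inside.

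The remaining case $r=5\sqrt{2}/2$ with $O\ne P$ requires the main work. I will argue that every one of the $34+3=37$ bounds underlying the $n=32$ part of Theorem \ref{thm:R27-32} becomes a strict inequality for $O\ne P$. For each lattice point $L_j$ or pair $(X_k,Y_k)$ in Figure \ref{fig:R32Proof}, the bound $|OL_j|\le 5\sqrt{2}/2$ or $\min(|OX_k|,|OY_k|)\le 5\sqrt{2}/2$ arises from placing $O$ in a closed rectangle whose diagonal of length $5\sqrt{2}/2$ terminates at the relevant lattice point and at a specific opposite corner; I will verify rectangle by rectangle that each bound becomes equality only at $O=P$. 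This forces every bound to be strict for $O\ne P$, placing all $37$ structural lattice points strictly inside the disk of radius $5\sqrt{2}/2$ centered at $O$ and hence giving at least $37$ interior lattice points.

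\textbf{Main obstacle.} The technical heart is the rectangle-by-rectangle verification. Most of the $34$ bounds $|OL_j|\le 5\sqrt{2}/2$ are automatically strict for every $O\in\Delta$ because the corresponding $L_j$ lies strictly inside $\calM_{32}$; only the bounds associated with lattice points on the boundary circle of $\calM_{32}$ can ever be tight, and the claim reduces to showing that each of \emph{those} specific bounds becomes equality only at $O=P$. If instead some bound admitted equality at some $O\in\Delta\setminus\{P\}$, the corresponding lattice point could sit on the boundary while other structural points remained strictly inside, producing an intermediate interior count of $33,34,35,$ or $36$ and destroying the lemma. Given the four-fold symmetry of the $12$ boundary points of $\calM_{32}$ about $P$, this verification, while finite and elementary, is what the lemma ultimately hinges on; once complete, the dichotomy ``exactly $32$ at $O=P$, at least $37$ otherwise'' follows immediately.
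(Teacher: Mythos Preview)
Your proposal is correct and follows essentially the same route as the paper: invoke the $n=32$ case of Theorem~\ref{thm:R27-32} for $r>5\sqrt{2}/2$, and for $r=5\sqrt{2}/2$ with $O\ne(1/2,1/2)$ observe that each of the $34+3$ bounds from Figure~\ref{fig:R32Proof} becomes strict (the paper compresses this to the single sentence ``the same proof works''). One minor slip: formula~\eqref{equ:NumberOfPt} is for circles centered at the origin, so it does not directly give the closed-disk count of $44$ for a circle centered at $(1/2,1/2)$; you need to count those $44$ points directly via $(2a-1)^2+(2b-1)^2\le 50$ with both coordinates odd.
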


\begin{proof}
This is clear from the proof of case $n=32$ in the above theorem when $r>5\sqrt{2}/2$. If $r=5\sqrt{2}/2$ and
the center of the circle is not at the center of any unit square, then the same proof works.
\end{proof}

\begin{thm}\label{thm:R33-36}
We have $R_n\prec 5\sqrt{2}/2$ for $33\le n\le 36$.
\end{thm}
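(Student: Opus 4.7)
The plan is to squeeze $R_n$ between two bounds, both equal to $5\sqrt{2}/2$, and then rule out the MC case by exploiting the rigid structure afforded by Lemma \ref{lem:n=33Step2}.

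First I would establish the lower bound $R_n\ge 5\sqrt{2}/2$. Theorem \ref{thm:R27-32} shows that $32$ is MC with $R_{32}=5\sqrt{2}/2$, so the monotonicity of the sequence $\{R_n\}$ (Theorem \ref{thm:RnIncrease}) immediately gives $R_n\ge R_{32}=5\sqrt{2}/2$ for every $n\ge 32$, and in particular for $33\le n\le 36$.

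Next I would establish the matching upper bound. By Lemma \ref{lem:n=33Step2}, every circle of radius $\ge 5\sqrt{2}/2$ encloses either exactly $32$ lattice points or at least $37$ of them. In particular, for $n\in\{33,34,35,36\}$ there is no $n$-enclosing circle of radius $\ge 5\sqrt{2}/2$, so every $n$-enclosing circle has radius strictly less than $5\sqrt{2}/2$. This gives $R_n\le 5\sqrt{2}/2$ whether $R_n$ is interpreted as an MC-radius or as a LUBOR, because in both cases $R_n$ is defined as the supremum of radii of $n$-enclosing circles. Combining the two inequalities yields $R_n=5\sqrt{2}/2$.

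Finally, to conclude $r_n\prec R_n$ I would show that $n$ cannot be MC. If $n$ were MC, then by Theorem \ref{thm:MCLatticeCircle} there would exist an MC-circle of radius exactly $5\sqrt{2}/2$ enclosing exactly $n$ lattice points, directly contradicting Lemma \ref{lem:n=33Step2}. Hence $n$ is non-MC and $5\sqrt{2}/2$ is its LUBOR. Since all the real work is already packaged in Lemma \ref{lem:n=33Step2} and the monotonicity theorem, there is no substantive obstacle; the argument is essentially a two-line squeeze followed by a one-line contradiction, and the only mildly delicate point is applying the upper bound uniformly across the two interpretations of $R_n$.
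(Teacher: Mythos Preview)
Your proof is correct and follows essentially the same approach as the paper: combine the monotonicity Theorem \ref{thm:RnIncrease} with Lemma \ref{lem:n=33Step2} to pin down $R_n=5\sqrt{2}/2$ and rule out the MC case. If anything, your write-up is more explicit than the paper's two-line version, which leaves the $R_n=R_{32}$ subcase and the LUBOR identification implicit; the reference to Theorem \ref{thm:MCLatticeCircle} in your final step is harmless but unnecessary, since the bare definition of an MC-circle already gives the contradiction.
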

\begin{proof}
For each $33\le n\le 36$, if $n$ were MC then $R_n\ge R_{32}$ by Theorem \ref{thm:RnIncrease}. This contradicts Lemma \ref{lem:n=33Step2} if $R_n>R_{32}$ and therefore $R_n=R_{32}$.
\end{proof}

\begin{thm}\label{thm:R37}
We have $R_{37}=\sqrt{13}$ and $R_{38}\prec \sqrt{13}$.
\end{thm}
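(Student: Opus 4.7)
The plan is twofold: establish $R_{37}=\sqrt{13}$ by the same two-step method used in Theorems \ref{thm:R19-26} and \ref{thm:R27-32} (exhibit a lower bound via an explicit configuration, then certify an upper bound via a covering/triangle-circumradius argument), and then deduce $R_{38}\prec \sqrt{13}$ by a ``jump'' lemma combined with Theorem \ref{thm:FindNonMaxRadius}.

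First I would exhibit a specific circle of radius $\sqrt{13}$ centered at a point $O\in\Delta$ whose interior contains exactly $37$ lattice points and which carries at least three lattice points on its circumference (since $\sqrt{13}=\sqrt{2^2+3^2}$ is a lattice distance, such a circle is forced to be a lattice circle). The mere existence of such a circle, via Theorem \ref{thm:MCLatticeCircle}, gives $R_{37}\ge \sqrt{13}$, and pinning down that the center has at least three boundary lattice points also picks out the candidate configuration that will play the role of ``optimal center'' in the jump argument below.

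Next, for the upper bound I would argue, following the template of the earlier theorems, that any circle with center $O\in\Delta$ and radius $r>\sqrt{13}$ encloses at least $38$ lattice points. Concretely, the steps are: (i) mark off a core set $\{L_1,\dots,L_k\}$ (for an appropriate $k\le 37$) of lattice points that are forced into the interior as soon as $r>\sqrt{13}$, each justified by placing $O$ in a rectangle whose farthest vertex from $L_j$ is at distance exactly $\sqrt{13}$; (ii) pair up the remaining candidate lattice points as $\{X_i,Y_i\}$ so that the perpendicular bisector of each segment $X_iY_i$ cuts $\Delta$, forcing at least one point of each pair into the interior; and (iii) if the pair bookkeeping still leaves a shortfall, adjoin an auxiliary triangle $\triangle XYZ$ whose circumradius is strictly less than $\sqrt{13}$, so that at least one of its three vertices is inside the circle. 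This is the step that most resembles the ``new idea'' already used in the $n=25$ and $n=29$ cases of Theorem \ref{thm:R27-32}.

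For the non-MC claim, I would upgrade the upper bound to a jump lemma in the spirit of Lemma \ref{lem:n=33Step2}: every circle of radius $r\ge\sqrt{13}$ in $\Delta$ either has the ``optimal'' center producing exactly $37$ interior lattice points, or else encloses at least $39$. This forces $38$ to be skipped, so $R_{38}\le R_{37}$ by definition of the LUBOR; combined with $R_{38}\ge R_{37}$ from Theorem \ref{thm:RnIncrease}, we conclude $R_{38}=R_{37}=\sqrt{13}$ and that $38$ is non-MC, exactly as in the proof of Theorem \ref{thm:R33-36}. (Alternatively, once $38$ is known to be non-MC, the equality follows directly from Theorem \ref{thm:FindNonMaxRadius}.)

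The main obstacle is the covering/jump step: identifying the correct optimal center, the core set of forced-interior lattice points, and the supplementary pairs and triangles whose circumradii verifiably fall below $\sqrt{13}$, is the combinatorial heart of the proof and typically depends on a carefully drawn figure rather than a clean closed-form estimate. In particular, ruling out $38$ requires showing that no small perturbation of the optimal center can add \emph{exactly one} lattice point to the interior, which is where the numerical coincidence between several circumradii and $\sqrt{13}$ must be exploited.
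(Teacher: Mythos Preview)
Your proposal follows essentially the same approach as the paper, with one organizational difference worth noting. The paper does not separate the upper bound for $R_{37}$ from the jump argument: it shows directly that any circle of radius $r>\sqrt{13}$ (with center in $\Delta$) encloses a core of $38$ forced lattice points $L_1,\dots,L_{38}$ \emph{plus} one of a single pair $\{X,Y\}$, hence at least $39$. This single estimate simultaneously gives $R_{37}\le\sqrt{13}$ and rules out $38$, so no auxiliary triangle $\triangle XYZ$ is needed here, and no separate ``$\ge 38$'' step precedes the jump. Your more cautious two-stage plan (first $\ge 38$, then upgrade to a jump lemma handling $r\ge\sqrt{13}$ including the boundary case) would also work and is arguably more careful about the $r=\sqrt{13}$ case, which the paper treats rather tersely by invoking Theorem~\ref{thm:RnIncrease} and Theorem~\ref{thm:FindNonMaxRadius} without an explicit analogue of Lemma~\ref{lem:n=33Step2}.
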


\begin{proof}
$n=37$.
By left picture of Figure~\ref{fig:R37Proof}, circle $O$ with radius $\sqrt{13}$ encloses the 37 points $L_j, 1\le j\le 37$ and therefore  $R_{37}\ge \sqrt{13}$. If its radius is greater than $\sqrt{13}$, then by the right picture of Figure~\ref{fig:R37Proof} $O$ and encloses the 38 points $L_j, 1\le j\le 38$ and either $X$ or $Y$ in the interior of the circle. Thus $O$ encloses at least 39 points.

\begin{figure}[H]
\centering
  \includegraphics[scale=0.5]{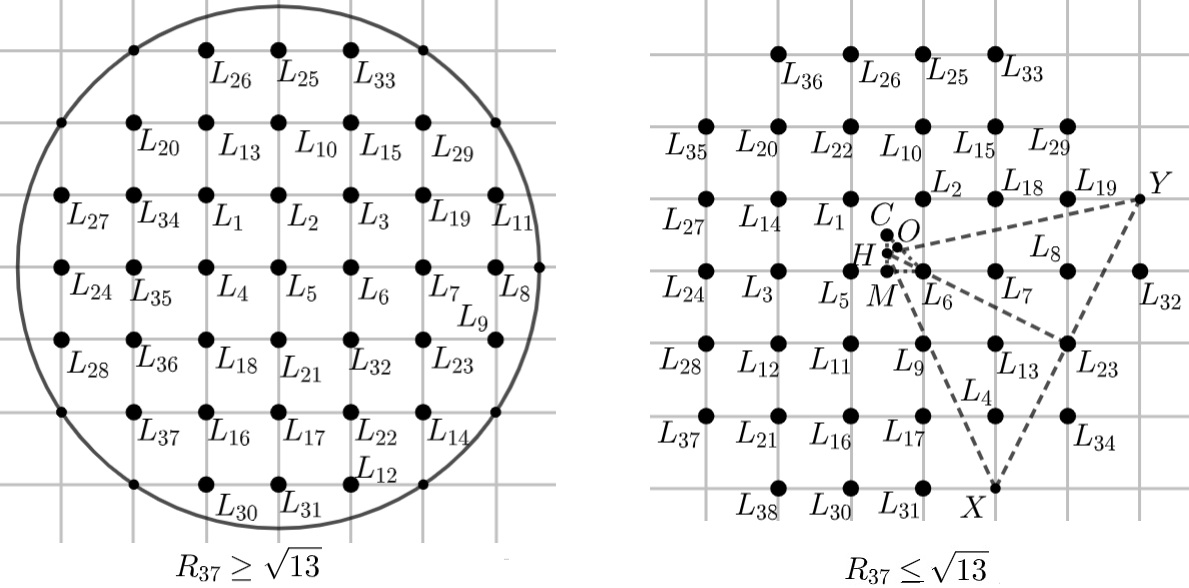}
\caption{Upper bound of $R_{37}$.}
\label{fig:R37Proof}
\end{figure}

Theorem \ref{thm:RnIncrease} and the above argument imply that 38 is non-MC and therefore $R_{38}\prec \sqrt{13}$ follows from
Theorem~\ref{thm:FindNonMaxRadius}.
\end{proof}

\begin{thm}\label{thm:R39}
We have $R_{39}=\frac{5\sqrt{34}}{8}$ and $R_{40}=\frac{\sqrt{481}}{6}$.
\end{thm}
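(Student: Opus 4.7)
The plan is to follow the same two-pronged strategy used throughout Theorems \ref{thm:R19-26}, \ref{thm:R27-32}, and \ref{thm:R37}: exhibit explicit lattice circles that realize the claimed radii, and then rule out any strictly larger enclosing circle. By Theorem \ref{thm:MCLatticeCircle}, if $39$ (resp.\ $40$) is MC then its MC-circle is a lattice circle, so the natural candidates are lattice circles whose radii match the claimed algebraic expressions. The form $5\sqrt{34}/8$ is consistent with the circumradius $R=abc/(4K)$ of a lattice triangle (since such circumradii typically come out as $\sqrt{\text{integer}}/\text{integer}$), and similarly for $\sqrt{481}/6$. First I would locate the three defining lattice points in each case and draw the corresponding circle together with its enclosed $39$ (resp.\ $40$) lattice points, which gives the lower bounds $R_{39} \ge 5\sqrt{34}/8$ and $R_{40} \ge \sqrt{481}/6$.

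For the matching upper bounds I would replicate the recipe developed in the preceding theorems. Let $O \in \Delta$ be the center of a circle of radius $r$ strictly larger than the claimed value. The first step is a \emph{base enclosure} argument: for each lattice point $L$ lying sufficiently close to $\Delta$, verify via rectangle or triangle distance inequalities (as in the proofs for $n=25$ and $n=29$) that $|OL| < r$, producing a base set of at least $39$ (resp.\ $40$) lattice points that are automatically inside. The second step uses the \emph{pair swap} idea: group the remaining candidate lattice points into symmetric pairs $(X_k,Y_k)$ and observe that, since $O$ lies on one side of the perpendicular bisector of $X_kY_k$, at least one of $X_k$ or $Y_k$ is at distance less than $r$ from $O$. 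When the pair swap does not suffice to close the count, the third step is the \emph{circumradius trick} first used in the $R_{25}$ proof: exhibit an auxiliary lattice triangle $XYZ$ whose circumradius is strictly smaller than $r$ and whose circumscribed disk contains the relevant subregion of $\Delta$, forcing at least one of $X,Y,Z$ into the circle.

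For $R_{40}$, Theorem \ref{thm:RnIncrease} gives $R_{40} \ge R_{39} = 5\sqrt{34}/8$ for free, so only an explicit $40$-enclosing lattice circle of radius $\sqrt{481}/6$ is needed for the lower bound. The upper bound argument then runs on the same tracks: any circle of radius $r > \sqrt{481}/6$ must enclose at least $41$ lattice points, established via the same base enclosure, pair swap, and circumradius tools, with the counting target now being $41$ rather than $40$.

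The main obstacle is not conceptual but combinatorial and numerical: one must correctly identify which lattice points make up the base set and which to group into swap pairs so that all the key distance estimates are simultaneously sharp. For $r$ only marginally larger than $5\sqrt{34}/8 \approx 3.644$ or $\sqrt{481}/6 \approx 3.656$, the composition of the base set can depend sensitively on where $O$ sits inside $\Delta$, and the circumradius of the auxiliary triangle must be shown to be strictly less than the claimed radius. This may require a case split reminiscent of $n=25$, where the auxiliary circumcenter landed off the grid and close to, but not on, a grid line. Once the triangles and pairs are correctly chosen, the remaining inequalities reduce to routine algebraic checks that can be double-checked by direct computation.
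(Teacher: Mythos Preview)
Your proposal is correct and mirrors the paper's approach: the paper proves $R_{39}$ by exhibiting a $39$-enclosing lattice circle of radius $5\sqrt{34}/8$, then shows any circle of larger radius encloses a base set of $38$ points, picks up a $39$th via the circumradius trick on an auxiliary triangle $\triangle XYZ$ (circumradius $\sqrt{15170}/34<5\sqrt{34}/8$), and a $40$th via a pair swap $(X_1,Y_1)$; for $R_{40}$ it uses only a base set of $39$ plus two pair swaps $(X_1,Y_1)$ and $(X_2,Y_2)$ with no circumradius trick needed. Your anticipation of exactly these three ingredients, and of the sensitivity of the auxiliary circumradius comparison, is spot on.
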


\begin{proof}
$n=39$.
By left picture of Figure~\ref{fig:R39Proof}, $O$ with radius $\frac{5\sqrt{34}}{8}$ encloses the 39 points $L_j, 1\le j\le 39$ and therefore  $R_{39}\ge\frac{5\sqrt{34}}{8}$. If its radius is greater than $\frac{5\sqrt{34}}{8}$, then by the right picture of Figure~\ref{fig:R39Proof} $O$ encloses the 38 points $L_j, 1\le j\le 38$ and either $X$, or $Y$ or $Z$ because the circumradius of $\triangle XYZ$ is $\sqrt{15170}/34\approx 3.62<\frac{5\sqrt{34}}{8}\approx 3.644344934$. It further encloses either $X_1$ or $Y_1$. Thus $O$ encloses at least 40 points. It implies that $R_{39}=\frac{5\sqrt{34}}{8}$.

\begin{figure}[H]
\centering
  \includegraphics[scale=0.5]{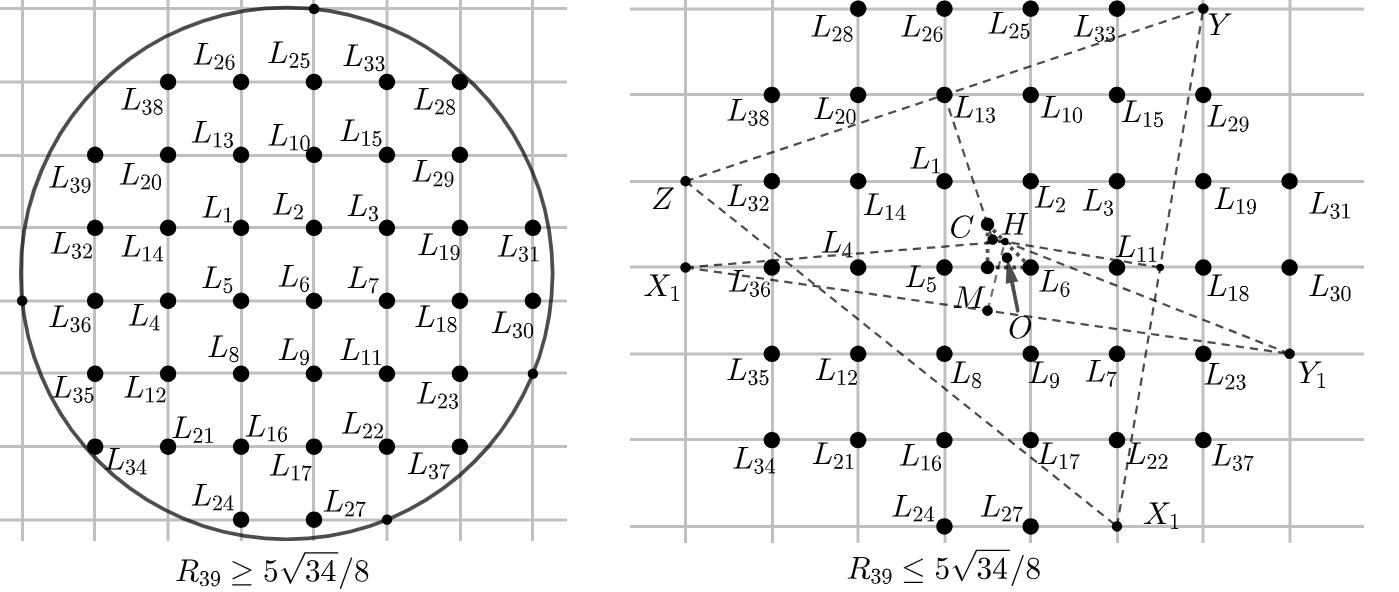}
\caption{Upper bound of $R_{39}$.}
\label{fig:R39Proof}
\end{figure}

$n=40$.
By left picture of Figure~\ref{fig:R40Proof}, $O$ with radius $\frac{\sqrt{481}}{6}$ encloses the 39 points $L_j, 1\le j\le 40$ and therefore  $R_{40}\ge\frac{\sqrt{481}}{6}$.
\begin{figure}[H]
\centering
  \includegraphics[scale=0.5]{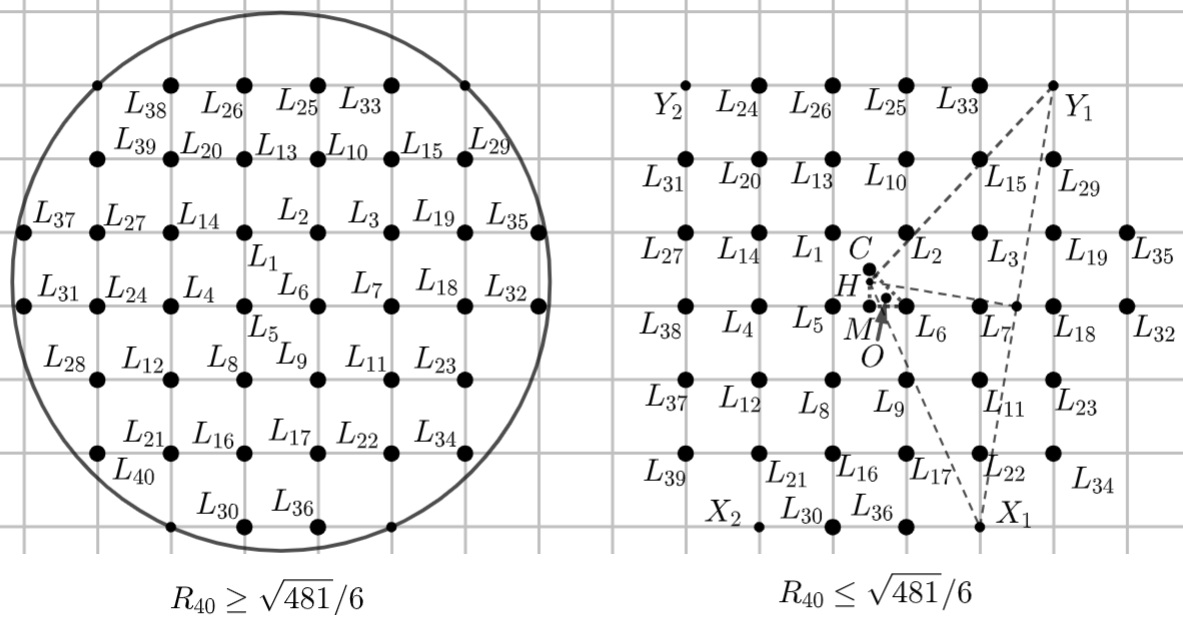}
\caption{Upper bound of $R_{40}$.}
\label{fig:R40Proof}
\end{figure}

If its radius is greater than $\frac{\sqrt{481}}{6}$, then by the right picture of Figure~\ref{fig:R40Proof} $O$ encloses the 39 points $L_j, 1\le j\le 39$ either $X_1$ or $Y_1$ by easy computation. By symmetry, it further encloses either $X_2$ or $Y_2$. Thus $O$ encloses at least 41 points. Therefore, $R_{40}=\frac{\sqrt{481}}{6}$.
\end{proof}

We now summarize our computation for $n\le 40$ in the following table.

\begin{table}[H]
\caption{Radius of largest $n$-enclosing circle for MC $n$, or its least upper bound for non-MC $n$.}
\centering\begin{tabular}{|c|c|c|c|c|c|c|c|c|c|c||c|c|}
\hline
\boldmath{$n$} & 1 &  2          &  3            &  4           &  5, 6               & 7    &  8            &9      &10 &11    &  12   &   13        \\
\hline $R_n$ & 1 &$\frac{\sqrt{5}}{2}$ &$\frac{5\sqrt{2}}{3}$ &$\frac{\sqrt{10}}{2}$ & $\prec R_4$ &$\frac53$ &$\frac{\sqrt{13}}{2}$ &2 &$\frac{\sqrt{65}}{4}$ &$\frac{\sqrt{442}}{10}$ &$\frac{17}{8}$ &$\sqrt{5}$ \\
\hline
\boldmath{$n$} &     14       &     15       &     16        &  17,18       &     19       &     20       &     21       &     22&     23       &     24 &25 &26\\
\hline $R_n$ &$\frac{\sqrt{85}}{4}$ &$\frac{5\sqrt{2}}{3}$ &$\frac{\sqrt{26}}{2}$ &$\prec R_{16}$ &$\frac{5\sqrt{17}}{8}$ &$\frac{\sqrt{65}}{3}$ &$2\sqrt{2}$ &$\frac{\sqrt{290}}{6}$ &$ \frac{29}{10}$ &$ \frac{\sqrt{221}}{5}$ &$3$  &$\frac{\sqrt{37}}{2}$  \\
\hline
\boldmath{$n$} &   27       & 28       &    29       &  30       &   31      &     32       &     33-36     &    37     &  38 &39 & 40 & \  \\
\hline $R_n$ &$\frac{\sqrt{37}}{2}$ &$\frac{\sqrt{85}}{3}$ &$\frac{17\sqrt{74}}{46}$ &$\frac{41\sqrt{2}}{18}$ &$\frac{5\sqrt{2}}{2}$ &$\frac{5\sqrt{2}}{2}$ &$\prec R_{32}$ &$\sqrt{13}$ &$\prec R_{37}$   &$\frac{5\sqrt{34}}{8} $ &$\frac{\sqrt{481}}{6}$  &\  \\
\hline
\end{tabular}
\end{table}

\section{Two special classes of $n$-enclosing circles}\label{sec:SpecialClass}

\subsection{First special class}\label{subsec:1stClass}
For any positive integer $k$, we may define the unique circle $\calS_k$
that goes through the following points: $A_1(-k,0),A_2(-k,1), A_3(0,k+1),
A_4(1,k+1), A_5(k+1,1), A_6(k+1,0), A_7(1,-k), A_8(0,-k).$ Suppose
$\calS_k$ encloses exactly $f(k)$ lattice points, then $\calS_k$ is a possible candidate for the largest
$f(k)$-enclosing circle. If $\calS_k$ is indeed the largest $f(k)$-enclosing circle then $f(k)$ is MC and
$$R_{f(k)}=\sqrt{k^2+k+\frac12}.$$
We have already seen this is true when $k=1$ and $k=2$ in the case $n=4$ in Theorem \ref{thm:R0-4}. We depicted these circles (and many others) in Figure \ref{fig:impactingFactorAll} on page \pageref{fig:impactingFactorAll} for $k\le 18$, $k\ne 4,7,12, 16$.

In general, it is not hard to see that
\begin{equation*}
    f(k)=4\sum_{j=1}^k \left(\left\lceil \sqrt{k^2 + k +  \frac12 - \Big(j - \frac12\Big)^2} +  \frac12 \right\rceil - 1 \right) \sim \pi  R_{f(k)}^2=\pi\left(k^2+k+\frac12\right).
\end{equation*}
To compare these values, we have the following table:
\begin{table}[H]
\caption{First special class of circles with apparent symmetric lattice points on each boundary.}
\label{tab:specialClass}
\centering\begin{tabular}{|c|c|c|c|c|c|c|c|c|c|c|}
\hline
$k$ &        1 & 2  & 3 & 4 & 5 & 6 & 7 & 8 & 9   &  10 \\
\hline
$f(k)$ &   4 & 16  & 32 & 60 & 88 & 124 & 172 & 216 & 276 &   332 \\
\hline
$\pi  R_{f(k)}^2\approx $ & 8 & 20 & 39 & 64 & 96 & 134 & 176 & 228 & 284  & 347   \\
\hline
$k$         &       11 & 12  & 13 & 14 & 15 & 16 & 17 & 18  &19 &20\\
\hline
$f(k)$ & 408 & 484  & 560 & 648 & 740 & 848 & 952 & 1060  &1184 & 1304\\
\hline
$\pi  R_{f(k)}^2\approx $ &  416& 492& 573& 661& 756& 856& 963& 1076 & 1195 &1321 \\
\hline
\end{tabular}
\end{table}

\subsection{Second special class}\label{subsec:2ndClass}
For any positive integer $k$, we can mimic the first class by letting $\calT_k$ be the circle centered at $(0,0)$ containing the eight lattice points $(\pm k,\pm 1),(\pm 1,\pm k)$. Suppose
$\calT_k$ encloses exactly $g(k)$ lattice points, then $\calT_k$ is a possible candidate for the largest
$g(k)$-enclosing circle. If $\calT_k$ is indeed the largest $g(k)$-enclosing circle then $g(k)$ is MC and
$$R_{g(k)}=\sqrt{k^2+1}.$$
Similar to the first class in Subsection \ref{subsec:1stClass}, we have
\begin{equation*}
   g(k)\sim \pi  R_{g(k)}^2=\pi (k^2+1).
\end{equation*}
To compare these values, we have the following table:
\begin{table}[H]
\caption{Second special class of circles with apparent symmetric lattice points on each boundary.}
\label{tab:specialClass2}
\centering\begin{tabular}{|c|c|c|c|c|c|c|c|c|c|c| }
\hline
$k$ &        1 & 2  & 3 & 4 & 5 & 6 & 7 & 8 & 9  &  10  \\
\hline
$g(k)$ &   5& 13& 29& 49& 81& 113& 149& 197& 253 & 317\\
\hline
$\pi  R_{g(k)}^2\approx $ & 6& 16& 31& 53& 82& 116& 157& 204& 258 & 317   \\
\hline
$k$         &   11 & 12  & 13 & 14 & 15 & 16 & 17 & 18  &19 &20  \\
\hline
$g(k)$ &    377& 441& 529& 613& 709& 797& 901& 1009 &1129 &1257\\
\hline
$\pi  R_{g(k)}^2\approx $ & 383&  456& 534& 619& 710& 807& 911& 1021 & 1137 & 1260 \\
\hline
\end{tabular}
\end{table}
Among all the $g(k)$'s in Table \ref{tab:specialClass2} with $g(k)<1100$ the following are all strong MC: $k=4,7,8,11,12,16,17,18$. We will present their MC-circle symmetry in Figure \ref{fig:impactingFactorAll} on page \pageref{fig:impactingFactorAll}.

Notice that the sequence $\{g(k)\}_{k\ge 1}$ is [A000328] on the OEIS website \cite{Sloane2025}. This is equivalent to saying that when we shrink the circle $\calT_k$ to radius $k$, no interior points are moved outside. Indeed, if lattice point $(a,b)$ satisfies $a^2+b^2<k^2+1$ if and only if $a^2+b^2\le k^2$.

\subsection{Some observations of the two special classes}\label{sec:ObserveSpecialClasses}
Recall that an MC number is said to be strong if it is followed immediately by a non-MC number.
We noticed that there are many MC numbers in the two classes in subsections \ref{subsec:1stClass} and  \ref{subsec:2ndClass}, and some are even strong ones.

The first class of special circles $\calS_k$ seem to contain only MC numbers. We find the second class contain only two non-MC numbers 5 and 317 in the table.
However, these two classes seem to behave very differently. While each $\calS_k$ ($k\le 19$) is the largest $f(k)$-enclosing circle, most of the $\calT_k$'s are not. Among the 19 numbers, $\calT_k$ is the largest $g(k)$-enclosing circle only for $g(k)=13, 49, 149, 197, 377, 441, 797, 901, 1009$. From this, it seems that as $k$ becomes bigger, it is more likely that  $\calT_k$ is the largest $g(k)$-enclosing circle.
In view of the apparent symmetries among the lattice points on the circles $\calS_k$ and $\calT_k$, we make for following conjecture.

\begin{conj} \label{conj:SpecialClass}
Let $f(k)$ and $g(k)$ be defined as in Subsections \ref{subsec:1stClass} and  \ref{subsec:2ndClass}. Then
\begin{enumerate}[label=(\arabic*)]
  \item \label{conj:SpecialClassCase1} The special class of numbers $\{f(k)\}_{k\ge1}$ contain only MC numbers, and the strong numbers among them form a subset of positive density.

  \item \label{conj:SpecialClassCase2}
  The special class of numbers $\{g(k)\}_{k\ge1}$ contain infinitely many MC numbers, and the strong numbers among them form a subset of positive density.
\end{enumerate}
\end{conj}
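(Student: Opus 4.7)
My plan is to treat both parts of the conjecture within a common framework built on Theorems \ref{thm:MCLatticeCircle}, \ref{thm:RnIncrease}, and \ref{thm:FindNonMaxRadius}, together with the counting formula \eqref{equ:NumberOfPt}. For each class the lower bound $R_{f(k)}\ge\sqrt{k^2+k+1/2}$ (respectively $R_{g(k)}\ge\sqrt{k^2+1}$) is immediate from the constructions of $\calS_k$ and $\calT_k$, so the real work is to rule out any competing lattice circle of strictly larger radius that encloses the same number of lattice points.

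For part~\ref{conj:SpecialClassCase1}, I would exploit the fact that the eight boundary lattice points of $\calS_k$ form a single dihedral $D_4$-orbit around the center $(1/2,1/2)$. Suppose for contradiction that a competitor circle $C$ of radius $r>\sqrt{k^2+k+1/2}$ centered at some $O\in\Delta$ encloses exactly $f(k)$ lattice points. Continuously deform $(O,r)$ back to $((1/2,1/2),\sqrt{k^2+k+1/2})$ and track boundary crossings: because the eight orbit points must eventually all hit the circumference simultaneously, the interior count drops by at least two at that symmetric limit, so along the way the count had to be at least $f(k)+2$, contradicting the assumption. The same $D_4$ symmetry should imply the strong MC claim: any $(f(k)+1)$-enclosing lattice circle is forced to have radius strictly below $\sqrt{k^2+k+1/2}$, because the threshold is crossed by orbits of size $\ge 2$, so $f(k)+1$ is skipped, and Theorem~\ref{thm:FindNonMaxRadius} then concludes that $f(k)+1$ is non-MC. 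Positive density of such strong $k$ should follow once we show that the set of $k$ for which additional lattice points happen to lie on the circumference of $\calS_k$ beyond the symmetric eight has density zero — an analytic condition on how often $k^2+k+1/2$ admits extra half-integer representations as $a^2+b^2$.

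Part~\ref{conj:SpecialClassCase2} is considerably more delicate, since, as the paper already observes, $\calT_k$ is frequently \emph{not} the MC-circle for $g(k)$: sliding its center away from the origin into $\Delta$ can genuinely enlarge an enclosing circle. My plan is to restrict to $k$ for which $k^2+1$ admits only the minimal set of sum-of-two-squares representations, so that $\calT_k$ carries precisely the eight prescribed boundary points and no others. Classical input on the distribution of integers of the form $k^2+1$, in the spirit of Iwaniec \cite{Iwaniec1988}, should supply an infinite subsequence of such $k$, after which a symmetric-deformation argument analogous to that of part~\ref{conj:SpecialClassCase1} applies. The \textbf{main obstacle}, for both parts but especially for~\ref{conj:SpecialClassCase2}, is controlling the \emph{wild} competitors — lattice circles of nearly equal radius but with drastically different centers and orientations, whose interior counts are dictated by the fine-scale distribution of $N(r)$, i.e.\ the Gauss circle problem itself. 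Establishing the positive-density assertion appears to require a genuinely new quantitative estimate on how often two distinct lattice circles of nearly equal radius enclose the same number of lattice points, and this step will likely need input from the theory of binary quadratic forms that goes beyond the combinatorial tools developed in the present paper.
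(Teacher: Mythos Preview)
The statement you are addressing is labeled a \emph{Conjecture} in the paper, and the paper offers no proof: it supplies only the numerical evidence in Tables~\ref{tab:specialClass} and~\ref{tab:specialClass2} and the remarks of Subsection~\ref{sec:ObserveSpecialClasses} for $k\le 20$. There is thus no proof in the paper against which your proposal can be compared; the statement is genuinely open, and your own closing paragraph correctly identifies it as such.

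Your sketch already has a real gap in part~\ref{conj:SpecialClassCase1}. The deformation argument produces no contradiction: the hypothesis concerns only the \emph{initial} competitor $(O,r)$, and the interior count is free to fluctuate along the path, so a count of $f(k)+8$ near the symmetric endpoint says nothing about $(O,r)$. More decisively, your argument that $f(k)+1$ is ``skipped'' analyzes only concentric enlargements of $\calS_k$ itself; it does not exclude an unrelated lattice circle of larger radius enclosing exactly $f(k)+1$ points. The paper's own data already refutes this step: for $k=4$ one checks that $(2a-1)^2+(2b-1)^2=82$ has only the eight solutions corresponding to $A_1,\dots,A_8$, so $\calS_4$ carries exactly the symmetric $D_4$-orbit on its boundary, yet $f(4)=60$ is \emph{not} strong---both $60$ and $61$ lie in the paper's list of MC integers $\le 100$, and hence $R_{61}>R_{60}=\sqrt{20.5}$ by the verified instance of Conjecture~\ref{conj:Main}\ref{cor:MaxCirRadiusStrictIncreasing}. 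Thus the ``wild competitor'' obstacle you flag only at the end is not peculiar to part~\ref{conj:SpecialClassCase2}; it is already the crux of part~\ref{conj:SpecialClassCase1}, and the $D_4$-symmetry of $\calS_k$ alone cannot resolve it.
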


Due to lack of more evidence, we also would like to know the answer to the following question.

\begin{prob}
Are there any other non-MC number beside 5 and 317 in the sequence $\{g(k)\}_{k\ge1}$?
\end{prob}

\section{Complete classification of $n\le 1000$}\label{sec:computer}
When $n$ gets larger and larger, it becomes more and more difficult to find the exact values of $R_n$. However, we have the following theoretical results to help us delegate the main computation to an algebra system such MAPLE or Mathematica. The first one is an analogous formula of \eqref{equ:NumberOfPt} for the number of interior points inside a circle of radius $r$.

\begin{thm}
For any circle of radius $r$ centered at the origin, the number $\nu(r)$  of interior lattice points is given by
\begin{equation}\label{equ:NumberOfIntPt}
     \nu(r)=4\lceil r \rceil-3 +4\sum_{j=1}^{\lceil r \rceil-1} (\lceil \sqrt{r^2-j^2}\rceil-1).
\end{equation}
\end{thm}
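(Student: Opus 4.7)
The plan is to count the interior lattice points row by row. A lattice point $(a,b)$ lies strictly inside the circle of radius $r$ centered at the origin if and only if $a^2+b^2<r^2$, so I would partition the count according to the value $b=j$ of the $y$-coordinate.

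The workhorse is the following elementary identity: for any real $s>0$, the number of integers $a$ with $|a|<s$ equals $2\lceil s\rceil-1$. This is because $|a|<s$ is equivalent to $|a|\le\lceil s\rceil-1$, and the strict inequality automatically handles the case where $s$ itself is an integer. Applying this with $s=r$ gives $2\lceil r\rceil-1$ interior lattice points on the row $j=0$. For $j\ne 0$, the constraint $j^2<r^2$ forces $1\le |j|\le \lceil r\rceil-1$, and then on each such row the count of interior points is $2\lceil\sqrt{r^2-j^2}\rceil-1$.

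Summing and doubling the $j\ne 0$ contributions to account for $\pm j$ produces
$$\nu(r)=(2\lceil r\rceil-1)+2\sum_{j=1}^{\lceil r\rceil-1}\bigl(2\lceil\sqrt{r^2-j^2}\rceil-1\bigr).$$
A routine algebraic rearrangement---expanding the $-1$ inside the sum, collecting the constant $-2(\lceil r\rceil-1)$ with the $2\lceil r\rceil-1$ out front, and then splitting each $\lceil\sqrt{r^2-j^2}\rceil$ as $(\lceil\sqrt{r^2-j^2}\rceil-1)+1$ to absorb one unit per term into the leading coefficient $4\lceil r\rceil-3$---yields the claimed closed form.

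There is no real obstacle here; the only subtle point is confirming that the key identity remains correct when $s$ or $\sqrt{r^2-j^2}$ is itself an integer, which is the case of a lattice point on the circle being excluded from the interior, and the strict inequality together with the ceiling function handles this uniformly. As a sanity check, the degenerate range $0<r\le 1$ makes the sum empty and the formula correctly returns $\nu(r)=4-3=1$ (only the origin is interior), and the case $r$ slightly greater than $1$ gives $\nu(r)=4\cdot 2-3=5$, matching the origin plus the four unit neighbours.
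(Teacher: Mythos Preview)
Your proof is correct and follows essentially the same idea as the paper's: a direct count of interior lattice points via the ceiling identity $\#\{a\in\mathbb Z:|a|<s\}=2\lceil s\rceil-1$. The only organizational difference is that the paper partitions into ``points on the axes'' plus ``four open quadrants'' (which yields the stated form $4\lceil r\rceil-3+4\sum(\lceil\cdot\rceil-1)$ immediately), whereas you partition row by row (naturally obtaining the equivalent $1+4\sum\lceil\cdot\rceil$) and then rearrange algebraically to match.
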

\begin{proof}
It is clear that the number of  interior lattice points on the $x$- or $y$-axes is
\begin{equation*}
    1+ 4(\lceil r \rceil-1)=4\lceil r \rceil-3
\end{equation*}
since  $\lceil r \rceil=r$ if $r$ is an integer while $\lceil r \rceil-1$ is the integer part of $r$ otherwise. Suppose now $(j,k)$ is in the interior of the circle with $j,k>0$. Then they must satisfy $1\le j\le \lceil r \rceil-1$ and $j^2+k^2<r^2$, whence the summands in \eqref{equ:NumberOfIntPt}.
\end{proof}

\begin{thm}\label{thm:RadiusBound}
Let $n$ be a non-negative integer. Suppose $R_n$ is the MC-radius or the LUBOR of $n$. Then $$\sqrt{n/\pi}<R_n<\sqrt{2}+\sqrt{n/\pi}.$$
\end{thm}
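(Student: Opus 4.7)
The plan is to prove the two inequalities separately by area-comparison arguments.

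For the lower bound $R_n > \sqrt{n/\pi}$, I would invoke Steinhaus's theorem (Theorem~\ref{thm:Steinhaus}), which produces an $n$-enclosing circle $C$ of area exactly $n$, and hence of radius $\sqrt{n/\pi}$. A slight perturbation of $C$, carried out exactly as in the proof of Theorem~\ref{thm:RnIncrease}, yields a strictly larger $n$-enclosing circle: if $C$ has no lattice point on its circumference, one enlarges directly; if some lattice point $A$ does sit on the circumference, one first pulls the center a tiny distance toward $A$ while shrinking the radius by an infinitesimal $\varepsilon$ to clear the boundary, then enlarges and translates. In either case the definition of $R_n$ (as MC-radius or LUBOR) immediately forces $R_n > \sqrt{n/\pi}$.

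For the upper bound $R_n < \sqrt{2}+\sqrt{n/\pi}$, my approach is a Voronoi-square covering. Fix any $n$-enclosing open disk $D$ of radius $r$ centered at $O$, and let $S$ be its set of $n$ interior lattice points. For each $L \in S$ let $Q_L$ be the closed unit square centered at $L$, which is the Voronoi cell of $L$ in the integer lattice. The key observation is that the concentric open disk $D'$ of radius $r - \sqrt{2}/2$ is contained in $\bigcup_{L\in S} Q_L$: every $P\in D'$ has a nearest lattice point $L_P$ with $|PL_P|\le \sqrt{2}/2$, hence $|OL_P|<r$, so $L_P\in S$ and $P\in Q_{L_P}$. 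Comparing areas (the cells $Q_L$ have pairwise-disjoint interiors and unit area each) gives $\pi(r-\sqrt{2}/2)^2 \le n$, and therefore $r \le \sqrt{n/\pi}+\sqrt{2}/2 < \sqrt{n/\pi}+\sqrt{2}$. Applying this to every $n$-enclosing disk (and, in the non-MC case, to a sequence whose radii approach $R_n$ from below) yields the stated upper bound; in fact it gives the sharper constant $\sqrt{2}/2$.

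The two arguments are elementary, and the only mild obstacle is the strict inequality in the lower bound, which is precisely why the Steinhaus circle must be perturbed rather than used verbatim. Handling the degenerate situation of lattice points landing on its boundary is exactly the infinitesimal pull-and-enlarge technique already developed in Theorem~\ref{thm:RnIncrease}, so no new ideas are required beyond what the paper has built up so far.
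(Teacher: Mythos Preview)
Your upper-bound argument via Voronoi cells is correct and in fact yields the sharper constant $\sqrt{2}/2$ in place of $\sqrt{2}$. The paper instead attaches to each interior lattice point the unit square having that point as its \emph{lower-left} corner, which forces the cruder shrinkage by $\sqrt{2}$ and then invokes the transcendence of $\pi$ to make the final inequality strict. Your choice of centered cells is the more natural covering and makes that last appeal unnecessary.

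The lower-bound argument, however, has a genuine gap. Steinhaus's circle gives only $R_n\ge\sqrt{n/\pi}$, and your perturbation does not secure strict inequality. The manoeuvre in Theorem~\ref{thm:RnIncrease} does \emph{not} produce a strictly larger $n$-enclosing circle: pulling the center toward a boundary lattice point $A$ while keeping $A$ on the circumference \emph{shrinks} the radius to $r-\varepsilon$, and the subsequent step there pushes $A$ inside, yielding an $(n{+}1)$-enclosing circle of smaller radius, not an $n$-enclosing circle of radius exceeding $r$. Your ``then enlarges and translates'' is exactly where the difficulty sits: if the Steinhaus circle carried three or more boundary lattice points, enlarging while holding the interior count at $n$ may simply be impossible---this is precisely the obstruction at an MC-circle. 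The paper sidesteps perturbation altogether: by Corollary~\ref{cor:RnAllLatticeCircleRadius}, $R_n$ is the radius of some lattice circle, so $R_n^2\in\mathbb{Q}$, whereas $n/\pi$ is transcendental for $n\ge1$; hence $R_n\neq\sqrt{n/\pi}$. You can repair your route with the same observation: the Steinhaus radius $\sqrt{n/\pi}$ is not algebraic, so that circle cannot be a lattice circle, hence has at most two boundary lattice points, and can then be enlarged directly as in the proof of Theorem~\ref{thm:MCLatticeCircle}.
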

\begin{proof}
By Steinhaus' Theorem \ref{thm:Steinhaus}, there exists a circle of area $n$ which encloses exactly $n$ lattice points in its interior. Since its radius must be bounded above by $R_n$ we get $\pi R_n^2>n$. Note the equality cannot hold as $\pi$ is transcendental while $R_n$ is a rational or a quadratic algebraic number as a radius of some lattice circle by Theorem \ref{thm:MCLatticeCircle} and Theorem \ref{thm:Non-MCLatticeCircle}.

The other direction $R_n<\sqrt{2}+\sqrt{n/\pi}$ follows from Gauss' bound \eqref{equ:GaussBound} easily. For completeness, we now provide a quick proof as follows.

The $n<4$ cases are clearly true. Suppose $n\ge4$. Let $\calO=\calM_n$ be the MC-circle of $n$ with radius $r=R_n$ if $n$ is MC. If $n$ is non-MC let $\calO$ be an $n$-enclosing circle of radius $r<R_n$ where $r$ can be made arbitrarily close to $R_n$ by Theorem \ref{thm:Non-MCLatticeCircle}. Let us shade a lattice square if its lower left corner is an interior point of $\calO$. Then the shaded area is exactly $n$. We now show that we we reduce the radius of $\calO$ by any $d>\sqrt{2}$ while fixing the center of $\calO$ , then all of the circumference will lie in the shaded area.
\begin{figure}[H]
\centering
  \includegraphics[scale=0.6]{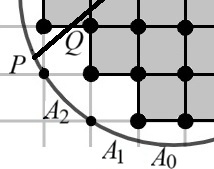}
\caption{Three arc pieces $A_j$, $j=0,1,2$, of the circumference of $\calO$ represent all possible such pieces cut out by lattice squares.}
\label{fig:Gauss}
\end{figure}

Indeed, the circumference of $\calO$ is cut into arc pieces by the lattice squares with at least one corner lattice point contained inside $\calO$. Then each piece not already in the shaded area must have one of the three forms (by applying some rigid motions if necessary) shown in Figure \ref{fig:Gauss}: $A_j$ is bounded by $j$ lattice points on two sides ($j=0,1,2$). Let $P$ be any point on such a piece and $C$ the center of $\calO$. Let $Q$ be the intersection of line $PC$ with the boundary of the shaded area. Then clearly $|PQ|\le \sqrt{2}$ and therefore $\pi (r-\sqrt{2})^2<n$. Taking $r\to R_n$ we get
$R_n\le\sqrt{2}+\sqrt{n/\pi}$. However, the inequality cannot hold since $R_n$ is algebraic while $\sqrt{2}+\sqrt{n/\pi}$ is clearly transcendental.

This completes the proof of the theorem.
\end{proof}

\begin{rem}\label{rem:sloan2}
The sequence $\{\nu(k)\}_{k\ge 1}$ is the sequence A051132 on the OEIS website \cite{Sloane2025}. The difference sequence $\{N(k)-\nu(k)\}_{k\ge 1}$ [A046109] provides the number of lattice points on the circumference of the circle with radius $r$ centered at $0$, where $N(r)$ is defined by \eqref{equ:NumberOfPt}.
\end{rem}

Given any bound $M$, we now may use the following algorithm to determine if $n<M$ is MC or non-MC and then find its MC-radius or LUBOR.
\begin{enumerate}[label=\textbf{Step \arabic*.}]
  \item For each $1\le n\le M$, compute all the radius of $n$-enclosing \textbf{lattice} circles, and then find the large radius, call it $\rho_n$.

  \item Start with the MC number 0. For each $n\ge 1$, suppose we have already determined the cases $<n$.  Suppose $k<n$, $k$ is an MC number and all the numbers between $k$ and $n$ (exclusive) are non-MC. Then we can determine $n$ by Main Theorem \ref{mainthm:AllResults}\ref{mainthm:Case4} (=Corollary \ref{cor:NonMCradiiStrictDecrease}): $n$ is MC if and only if $\rho_n\ge \rho_k$. Moreover, $R_n=\rho_n$ if $n$ is MC and $R_n=\rho_k$ if $n$ is non-MC.
\end{enumerate}

Using the above algorithm, we have found the following MC integers $n\le 100$ with MAPLE:
\begin{align*}
& \{0\text{---}4, 7\text{---}16, 19\text{---}32, 37, 39\text{---}49,
 51, 52, 55\text{---}62, 64\text{---}69, 74\text{---}88, 91\text{---}96, 99, 100\}.
\end{align*}
We also can confirm the rest of the integers $n\le 100$ are all non-MC:
$$\{5,6,17,18, 33,\text{---}36, 38, 50, 53, 54, 63, 70\text{---}73, 89, 90, 97, 98\}.$$
Furthermore, we can even determine $R_n$ exactly for all $n\le 100$.

Essentially the same information can be found for all $n<1100$ in Figure \ref{fig:impactingFactorAll}. They show all the strong numbers as well as all the non-MC numbers in this range.

\begin{figure}[H]
\centering
  \includegraphics[scale=0.48]{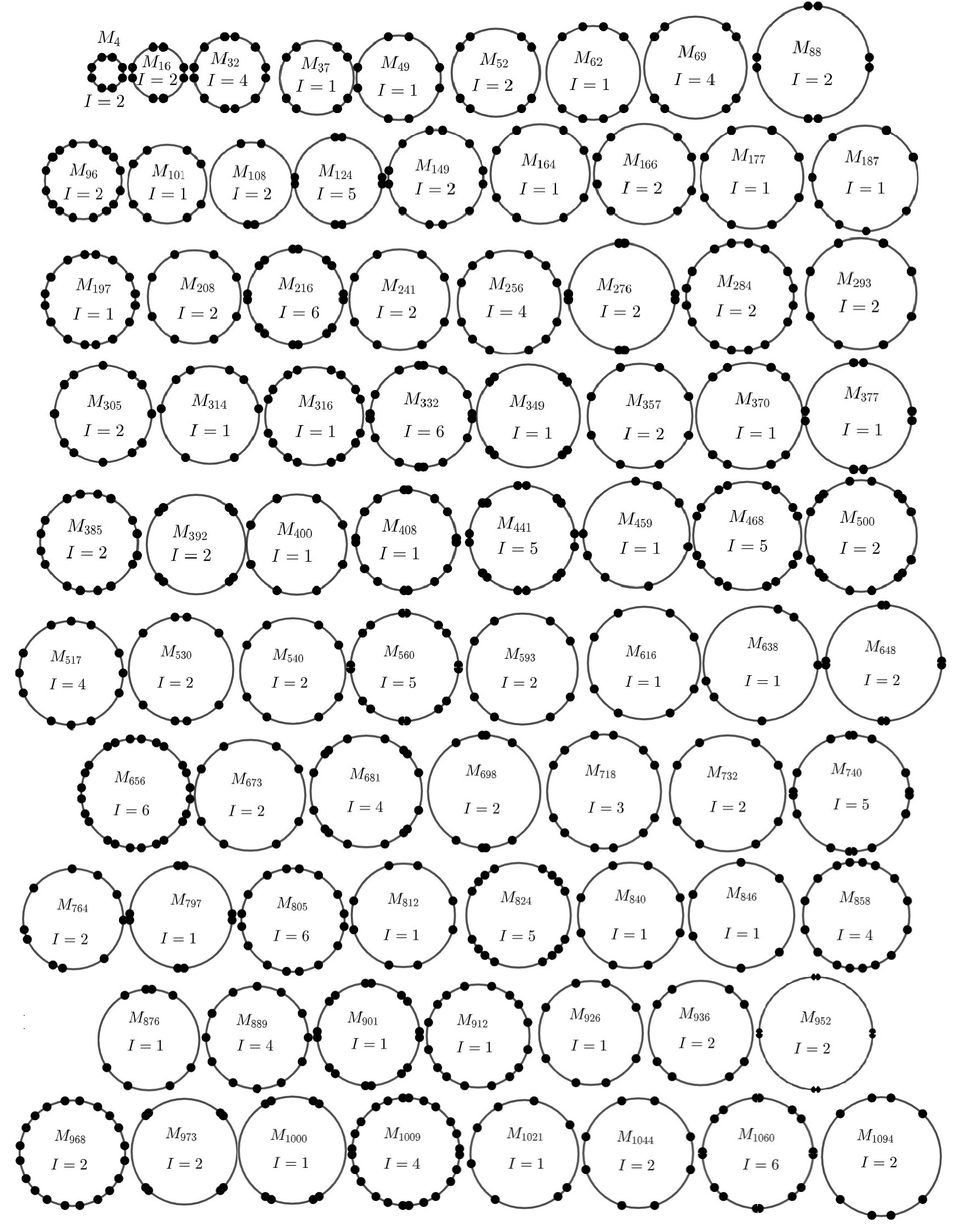}
\caption{All strong MC numbers $n<1100$ with their impacting indices.}
\label{fig:impactingFactorAll}
\end{figure}

From the evidence we found in the range $n<1100$, we make the following conjectures. All the symmetries mentioned below are mirror symmetries keeping the lattice system invariant.
\begin{conj}\label{conj:Main}
We have
\begin{enumerate}[label=(\arabic*)]
  \item \label{cor:MaxCirRadiusStrictIncreasing}    For all MC numbers $k<\ell$ we have $R_k<R_\ell$.
  \item \label{conj:nonMaxLatCircle} For every non-MC $n\ne 6$, there exists a largest $n$-enclosing \textbf{lattice} circle.
  \item Let $n\ne 6$ and let $\rho_n$ be the radius of the largest $n$-enclosing \textbf{lattice} circle if it exists and $\rho_n=0$ otherwise. Then the numbers in $\{\rho_n\}_{n\ge 0}$ are pairwise distinct.
  \item There are arbitrary long consecutive MC integer sequences.
  \item \label{conj:impactingFactorUnbound} There are arbitrary long consecutive non-MC integer sequences, namely, the impacting indices are unbounded.
  \item The density of MC integers among nonnegative integers is greater than 80\%.
  \item The density of non-MC integers among nonnegative integers is positive, but less than 20\%.
  \item There are infinitely many strong MC numbers, with a positive density among all MC numbers.
  \item Every MC-circle of strong MC number has at least six lattice points on its circumference.
  \item For the majority of the MC numbers, the lattice points on the circumference of its MC-circle have at least one mirror symmetry. This density should be over 50\%.
  \item There are infinitely many MC numbers $n$ such that $\calM_n$ has exactly three lattice points on its circumference which have no lattice invariant mirror symmetry. For the vast majority of all such $n$'s (with density $>$98\%), the lattice points form a scalene triangle.
  \item There are very few MC numbers $n$ whose MC-circle has four lattice points without any mirror symmetry, the density should be $<$ 2\%.
  \item The lattice points on the circumference of every MC-circle of strong MC $n$ have at least one mirror symmetry.
  \item Less than 10\% of the MC-circles of strong $n$ among all strong MC-circles have slant $45^\circ$ axes of symmetry.
  \item Around 20\% of the MC-circles among all MC-circles have slant $45^\circ$ axes of symmetry.
  \item Less than 10\% of the MC-circles of strong $n$ among all strong MC-circles have only one horizontal or vertical axis of symmetry.
  \item More than 20\% of the MC-circles among all MC-circles have only one horizontal or vertical axis of symmetry.
  \item More than 80\% of the MC-circles of strong $n$ among all strong MC-circles have at least two axes of symmetry.
  \item Less than 15\% of the MC-circles among all MC-circles have at least two axes of symmetry.
\end{enumerate}
\end{conj}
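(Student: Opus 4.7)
The nineteen parts of Conjecture \ref{conj:Main} differ greatly in depth, so the plan is to attack them in clusters, starting with the parts most amenable to the tools built earlier in the paper and flagging where the true obstacles lie. The common thread is that $R_n$ and the auxiliary $\rho_n$ are radii of lattice circles by Corollary \ref{cor:RnAllLatticeCircleRadius}, so they take values in a discrete subset of $[0,\infty)$, and most of the arithmetic-flavored parts reduce to exploiting this discreteness in any bounded window together with the infinitesimal wiggle technique from the proof of Theorem \ref{thm:RnIncrease}.

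For parts (1)--(3) the program is concrete. For strict monotonicity (1), assume $R_k = R_\ell$ for MC numbers $k < \ell$. Both $\calM_k$ and $\calM_\ell$ are lattice circles by Theorem \ref{thm:MCLatticeCircle}, and I would adapt the perturbation from Theorem \ref{thm:RnIncrease} applied to $\calM_k$ to build a sequence of $(k+i)$-enclosing circles whose radii converge upward from $R_k$; setting $i = \ell - k$ forces at least $\ell$ enclosed lattice points at radius arbitrarily close to $R_\ell$, contradicting the maximality of $\calM_\ell$. For (2), by Theorem \ref{thm:RadiusBound} the set of $n$-enclosing lattice circles with radius bounded by $\sqrt{2} + \sqrt{n/\pi} + 1$ is finite up to rigid motion, so a maximum exists whenever the set is non-empty; the content of the conjecture is then the assertion that $n = 6$ is the unique integer for which that set is empty, which would be dispatched by a direct enumeration of the possible three-point configurations near the LUBOR. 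Part (3) would follow from (2) by a case-by-case check ruling out sporadic coincidences $\rho_m = \rho_n$, though these coincidences are a real danger and would first need to be checked experimentally in the range $n<1100$.

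The unboundedness and density statements (4)--(8) lie much deeper. Unbounded impacting indices (5) would follow from showing that the jumps in the enumeration of lattice-circle radii can be arbitrarily large, which in turn is tied to gaps in the sum-of-two-squares counting function and naturally invites the analytic methods of Cilleruelo, Iwaniec, and Huxley cited in the introduction. The density bounds (6)--(8) demand an effective version of such gap estimates together with a second-moment count of how many lattice-circle radii fall in a window $[r, r + o(r)]$. The symmetry-density statements (9)--(19) appear hardest of all: they predict sharp percentages of configuration types for extremal circles, and would require a complete classification of symmetry types of lattice point sets on a circle together with an equidistribution principle for the centers of $\calM_n$ within the key triangle $\Delta$ that I do not know how to set up rigorously.

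The main obstacle throughout is that MC-status is not a local arithmetic property of $n$: whether $n$ is MC depends on the comparison $\rho_n \ge \rho_k$ with the previous MC number $k$ via Corollary \ref{cor:NonMCradiiStrictDecrease}, and neither $\rho_n$ nor the identity of $k$ admits any closed-form description. Until one finds a structural principle predicting which lattice circle ``wins'' at each radius scale, even the accessible parts of Conjecture \ref{conj:Main} will demand substantial \emph{ad hoc} geometric casework, and the density and symmetry portions will likely remain genuinely open, serving rather as a guide for experimental investigation and as a bridge to the analytic number theory of the Gauss circle problem.
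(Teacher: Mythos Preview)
The statement you are addressing is labelled a \emph{Conjecture} in the paper, and the paper offers no proof whatsoever: immediately after stating it, the author writes ``By computer search, we have the following data to support the above conjecture'' and then lists, for each of the nineteen items, purely numerical evidence drawn from the range $n<1100$. So there is no ``paper's own proof'' to compare against; your proposal is being measured against an explicitly open problem, and the honest verdict is that neither you nor the paper proves any part of it.

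That said, your proposal is not merely a restatement of the difficulty: you sketch concrete attacks on (1)--(3), and these sketches contain a real gap worth naming. For part (1) you propose to assume $R_k=R_\ell$ for MC numbers $k<\ell$ and then ``adapt the perturbation from Theorem \ref{thm:RnIncrease}'' to force at least $\ell$ enclosed points at radius arbitrarily close to $R_\ell$, contradicting maximality. But the perturbation in that theorem \emph{decreases} the radius (one pulls the centre toward a boundary lattice point and then nudges it further), so iterating it from $\calM_k$ produces $(k+i)$-enclosing circles of radius $R_k-\varepsilon$, not $R_k+\varepsilon$. Having an $\ell$-enclosing circle of radius just below $R_\ell$ is no contradiction at all, since $\calM_\ell$ itself already realises radius $R_\ell$. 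Nothing in the paper's toolkit rules out two distinct lattice circles of identical radius enclosing different numbers of points, which is precisely why (1) is listed as a conjecture rather than a corollary of Theorem \ref{thm:RnIncrease}. Your treatment of (2) has a similar soft spot: finiteness of lattice circles in a bounded radius window is fine, but ``a direct enumeration of three-point configurations near the LUBOR'' does not explain why, for \emph{every} $n\neq 6$, some lattice circle encloses exactly $n$ points; that existence claim is the entire content of the conjecture and you have not reduced it to anything checkable.

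Your assessment of (4)--(19) as genuinely deep, tied to gap and equidistribution phenomena beyond the paper's methods, matches the author's own stance and is the right calibration.
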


By computer search, we have the following data to support the above conjecture.
\begin{enumerate}[label=(\arabic*)]
  \item For all MC numbers  $k<\ell<1111$ (the actual range we computed) we have $R_k<R_\ell$.
  \item For every $n\ne 6$, $0\le n<1111$ there is a largest $n$-enclosing lattice circle among all $n$-enclosing lattice circle.
  \item This holds for all $0\le n<1111$. In particular, it implies the proceeding conjecture which is equivalent to saying $\rho_n\ne 0$ unless $n=6$.
  \item The 32 consecutive numbers from 410 to 441, inclusive, are all MC.
  \item This may have the weakest support. The largest impacting inex we found so far is 6.
  \item There are 911 MC integers in the range $0\le n<1100$, about 83\%. Table \ref{tab:distribMax} shows their distributions over the 100-long intervals:
  \begin{table}[H]
\caption{The distribution of MC integers $n<1100$.}
\label{tab:distribMax}
\centering\begin{tabular}{|c|c|c|c|c|c|c|c|c|c|c|c|}
\hline
$\lfloor n/100 \rfloor$  &   0 &  1  & 2  & 3 & 4 & 5 & 6 & 7 & 8 & 9  &10 \\
\hline
$\sharp\{n|$  $n$ is MC$\}$ &   79& 84& 80& 81& 87& 83& 83& 86& 77& 87 & 84 \\
\hline
\end{tabular}
\end{table}

  \item There are 189 non-MC integers in the range  $0\le n<1100$, about 17\%. Table \ref{tab:distribNonMax} shows  their distribution over the 100-long intervals.
\begin{table}[H]
\caption{The distribution of non-MC integers $n<1100$.}
\label{tab:distribNonMax}
\centering\begin{tabular}{|c|c|c|c|c|c|c|c|c|c|c|c|}
\hline
$\lfloor n/100 \rfloor$  &   0 &  1  & 2  & 3 & 4 & 5 & 6 & 7 & 8 & 9  &10 \\
\hline
$\sharp\{n|$ non-MC$\}$ & 21 & 16  & 20 & 19 & 13 & 17& 17 & 14 & 23 & 13 & 16 \\
\hline
\end{tabular}
\end{table}

  \item In the range $0\le n<1100$, there are 80 strong ones among the 911 MC numbers. Hence, strong ones count about 7.3\% among all nonnegative integer, and about 8.8\% among all MC numbers.
     Table \ref{tab:distribStrong} shows their distribution over the 100-long intervals.
\begin{table}[H]
\caption{The distribution of non-MC integers $n<1100$.}
\label{tab:distribStrong}
\centering\begin{tabular}{|c|c|c|c|c|c|c|c|c|c|c|c|}
\hline
$\lfloor n/100 \rfloor$  &   0 &  1  & 2  & 3 & 4 & 5 & 6 & 7 & 8 & 9  &10 \\
\hline
$\sharp\{n|$ strong MC$\}$ & 10 & 9  &7 & 10 & 5 & 6& 7 & 5& 8 & 7& 6 \\
\hline
\end{tabular}
\end{table}

  \item Notice that $\calM_{638}$ and $\calM_{846}$ are the only two MC-circles of strong MC numbers $<1100$ with six lattice points on their circumference while all the others have at least eight.

  \item The symmetry property seems to hold for the majority of MC-circles. There are 529 such $M_n$'s for all $n<1100$. The density of such MC numbers among all MC numbers should be around 59\%.
      Table \ref{tab:distribAllSymmetricCircles} shows  their distribution over the 100-long intervals.

\begin{table}[H]
\caption{The distribution of symmetric $\calM_n$, $n<1100$.}
\label{tab:distribAllSymmetricCircles}
\centering\begin{tabular}{|c|c|c|c|c|c|c|c|c|c|c|c|}
\hline
$\lfloor n/100 \rfloor$  &   0 &  1  & 2  & 3 & 4 & 5 & 6 & 7 & 8 & 9  &10 \\
\hline
$\sharp\{n|$ symmetric $\calM_n\}$ & 86 & 73 & 63 & 71 & 60 & 66 & 65 & 55 & 74 & 52 & 52\\
\hline
\end{tabular}
\end{table}

  \item For $n<1100$, there are 373 MC numbers whose MC-circle has only three lattice points on its circumference which form a scalene triangle, or an isosceles triangle whose axis of mirror symmetry is not lattice invariant (the latter case is extremely rare since we only found one such MC number: 585). This occurs first at $n=18$ whose MC-circle has only three lattice points on its circumference: $(0, 3), (1, -1), (-3, 2)$. The triangle is clearly scalene and therefore no lattice invariant symmetry can exist.  Table \ref{tab:distribAsymmetricCircles} shows   their distribution over the 100-long intervals.  The density of such $n$'s seems to be around 34\% among all nonnegative integers and 41\% among all MC numbers.

\begin{table}[H]
\caption{The distribution of asymmetric $\calM_n$ with 3 boundary lattice points, $n<1100$.}
\label{tab:distribAsymmetricCircles}
\centering\begin{tabular}{|c|c|c|c|c|c|c|c|c|c|c|c|}
\hline
$\lfloor n/100 \rfloor$  &   0 &  1  & 2  & 3 & 4 & 5 & 6 & 7 & 8 & 9  &10 \\
\hline
$\sharp\{n|$ asymmetric $\calM_n\}$ & 13 & 27 & 37 & 28 & 39 & 34 & 35 & 42 & 26 & 46 & 46  \\
\hline
\end{tabular}
\end{table}

  \item There are only 10 MC numbers $n<1100$ such that $\calM_n$ contains four lattice points without any lattice invariant mirror symmetry: 66, 352, 422, 706, 731, 760, 908, 940, 1019, 1024. The density of such $n$'s seems to be around 1\% among all nonnegative integers. We want to point out that the four lattice points sometime form an isosceles trapezoid, i.e., are equipped with mirror symmetries. This holds for the last 6 numbers in the above list. However, the axes of symmetry are neither horizontal, nor vertical, nor slant with 45$^\circ$. Hence, such symmetries are not lattice invariant.

  \item All the symmetries of strong MC-circles are evident from the pictures in Figure \ref{fig:impactingFactorAll}.

  \item There are only 3 strong MC numbers $n<1100$ whose MC-circles have 45$^\circ$ slant symmetry. The density of such MC numbers among all the strong ones is only about 4\%.

  \item Among the 911 MC numbers $<1100$, there are 176 $\calM_n$'s with 45$^\circ$ slant symmetry. Table \ref{tab:distribSlantCircles} shows  their distribution over the 100-long intervals.  The density of such $n$'s seems to be around 16\% among all nonnegative integers and around 19\% among all MC numbers.
\begin{table}[H]
\caption{The distribution of $\calM_n$'s with 45$^\circ$  slant symmetry, $n<1100$.}
\label{tab:distribSlantCircles}
\centering\begin{tabular}{|c|c|c|c|c|c|c|c|c|c|c|c|}
\hline
$\lfloor n/100 \rfloor$  &   0 &  1  & 2  & 3 & 4 & 5 & 6 & 7 & 8 & 9  &10 \\
\hline
$\sharp\{n|$ slant symmetric $\calM_n\}$ & 19 & 20 & 15 & 15 & 16 & 18 & 16 & 13 & 14 & 18 & 12 \\
\hline
\end{tabular}
\end{table}

  \item Among the 80 strong MC numbers $<1100$, only the following have only one horizontal/vertical symmetry: 108, 314, 846, and 876, only about 5\%.

   \item Among the 911 MC numbers $<1100$, there are 229 $\calM_n$'s with only one horizontal/vertical symmetry.
Table \ref{tab:distrib1Circles} shows their distribution over the 100-long intervals.  The density of such $n$'s seems to be around 23\% among all nonnegative integers and around 25\% among all MC numbers.
\begin{table}[H]
\caption{The distribution of $\calM_n$'s with only one horizontal/vertical symmetry, $n<1100$.}
\label{tab:distrib1Circles}
\centering\begin{tabular}{|c|c|c|c|c|c|c|c|c|c|c|c|}
\hline
$\lfloor n/100 \rfloor$  &   0 &  1  & 2  & 3 & 4 & 5 & 6 & 7 & 8 & 9  &10 \\
\hline
$\sharp\left\{n \left|\aligned & \calM_n\ \text{has only one }\\
        & \text{hor./ver. symmetry} \endaligned \right.
\right\}$ &  22 & 21 & 17 & 24 & 22 & 23 & 23 & 21 & 26 & 12 & 17\\
\hline
\end{tabular}
\end{table}

  \item More than 80\% of the MC-circles of strong $n<1100$ have at least two axes of symmetry. The exact density is 71/80 in this range.

  \item Among the 911 MC numbers $<1100$, there are 123 $\calM_n$'s with at least two axes symmetry.
Table \ref{tab:distrib23Circles} shows  their distribution over the 100-long intervals.  The density of such $n$'s seems to be around 11\% among all nonnegative integers and around 14\% among all MC numbers.
\begin{table}[H]
\caption{The distribution of $\calM_n$'s with  at least two axes symmetry, $n<1100$.}
\label{tab:distrib23Circles}
\centering\begin{tabular}{|c|c|c|c|c|c|c|c|c|c|c|c|}
\hline
$\lfloor n/100 \rfloor$  &   0 &  1  & 2  & 3 & 4 & 5 & 6 & 7 & 8 & 9  &10 \\
\hline
$\sharp\left\{n  \left| \aligned & \calM_n\ \text{has at least }\\
        & \text{two symmetries} \endaligned \right.
\right\}$ & 24 & 16 & 11 & 13 & 9 & 8 & 9 & 7 & 11 & 9 & 7 \\
\hline
\end{tabular}
\end{table}
\end{enumerate}

We further remark that the two special classes of circles $\calS_k$ and $\calT_k$ with quite strong symmetries in Section \ref{sec:SpecialClass} seem to contain mostly MC numbers and many are actually strong ones.

However, case $n=89$ shows that an $n$-enclosing lattice circle with strong symmetry does not always guarantee it is an MC-circle of $n$. See Figure \ref{fig:case89}. In fact, 89 is non-MC and therefore even the largest lattice circle on the right of Figure \ref{fig:case89} is not a largest $89$-enclosing circle since such a circle does not exist.

\begin{figure}[H]
\centering
  \includegraphics[scale=0.7]{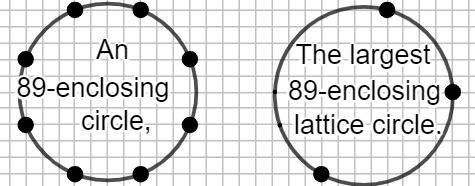}
\caption{A circle with strong symmetry may not be a largest enclosing circle.}
\label{fig:case89}
\end{figure}

\section{Concluding remarks and further research problems}
We have demonstrated in this paper that the naive looking question ``What is the largest circle containing $n$ lattice points in the interior?" has a very delicate answer. For each $n\le 40$ we can solve the problem by hand and for each $n<1100$ we can solve the problem with the aid of a computer. In theory,  via our approach we can determine all the MC and non-MC integers $n$ with a precise computation of the largest radius $\rho_n$ of all the lattice circles enclosing exactly $n$ lattice points, which can be achieved with a finite amount of computation.

The radius sequence $\{\rho_n\}_{n\ge 0}$ is \textbf{not} always increasing. However, whenever a drop after $\rho_n$ occurs there are some associated non-MC numbers. Moreover, the largest strictly increasing subsequence is conjectured to correspond exactly to all the MC numbers while the LUBOR $R_n$ of any non-MC $n$ is proved to be given by the largest radius in the sequence before $\rho_n$. When we only consider the largest \textbf{lattice} $n$-enclosing circles, $n=129$ is the only number less than 1100 such that $\rho_n<\rho_{n-1}$ with both $n$ and $n-1$ are non MC numbers.

There are some obvious patterns and symmetries concerning the MC-circles of $n$, particularly the strong ones, namely, those with trailing non-MC numbers. In Conjecture \ref{conj:Main}, we have listed numerous observations which we believe should hold in general.

There are a few related research problems which deserve some attention.
\begin{probs}\label{prob:concluding}
\begin{enumerate}[label=(\arabic*)]
  \item
We may consider the question of finding the \emph{smallest} $n$-enclosing circles. This problem is closely related to the concept we developed in this paper. For example, we may define similarly the \emph{minimal-enclosing numbers} and \emph{non-minimal-enclosing numbers}.

  \item We may consider the question of finding the \emph{smallest/largest} circles containing $n$ lattice points in the interior \emph{and on the boundary}. The answers will be drastically different.

  \item \label{prob:highDim} We may consider higher dimension analogs of these questions. For example, what is the largest/smallest sphere in 3-dimensional space that encloses exactly $n$ lattice points lattice points in the interior \emph{and/not on the boundary}.

  \item More generally, as  Steinhaus' Theorem \ref{thm:Steinhaus} has been generalized to any Hilbert space $X$ in  \cite{KaniaKochanek2017,Zwolenski2011}, we wonder when the largest $n$-enclosing open balls exist for a quasi-finite set (see  \cite[p.~129]{Zwolenski2011} for the definition). Do we have similar results in this setting?
\end{enumerate}
\end{probs}

Note that Kulikowski  \cite{Kulikowski1959} showed that for every positive integer $n$, there is a three-dimensional sphere which has exactly $n$ lattice points on its surface. This is related to Problem \ref{prob:concluding}\ref{prob:highDim}.

\section*{List of acronyms and key terinology}

\begin{quote}
\emph{MC number}: maximally circlable number, i.e., a nonnegative number $n$ such that there is a largest circle to contain $n$ lattice points in the interior

\emph{Non-MC number}: Non-maximally circlable number, circle as above does not exist

\emph{MC-circle of $n$}: the largest circle to contain $n$ lattice points in the interior

\emph{MC-radius of $n$}: the radius of MC-circle of $n$

\emph{MC-set of $n$}: a set of $n$ lattice points that are enclosed by an MC-circle of $n$

\end{quote}

\end{document}